\newcommand{\cC}{\mathcal{C}}
\newcommand{\cD}{\mathcal{D}}
\newcommand{\id}{\mathrm{id}}
\newcommand{\Z}{\ensuremath{\mathbb{Z}}}   
\newcommand{\Q}{\ensuremath{\mathbb{Q}}}
\newcommand{\Mod}[1]{\mathrm{Mod-}{#1}}
\newcommand{\Hom}{\mathrm{Hom}}
\newcommand{\soc}{\mathrm{soc}}
\newcommand{\modo}[1]{\mathrm{mod-}{#1}^{op}}
\newcommand{\m}{\mathbf{m}}
\newtheorem{thm}{Theorem}[section]
\newtheorem{cor}[thm]{Corollary}
\newtheorem{lem}[thm]{Lemma}
\newtheorem{prop}[thm]{Proposition}
\newtheorem{example}[thm]{Example}
\title{Dual Kasch Rings}
\author{Engin Büyüka\c{s}ık}
\address{Izmir Institute of Technology, Department of Mathematics, 35430, Urla, Izmir, Turkey}
\email{enginbuyukasik@iyte.edu.tr}
\author{Christian Lomp}
\address{Faculty of Science of the university of Porto, Department of Mathematics, Rua Campo Alegre, 687, 4169-007 Porto, Portugal}
\email{clomp@fc.up.pt}
\author{Haydar Baran Yurtsever}
\address{Izmir Institute of Technology, Department of Mathematics, 35430, Urla, Izmir, Turkey}
\email{haydaryurtsever@iyte.edu.tr}
\subjclass[2010]{16D10; 16D40; 16D80; 16E30}
\keywords{Injective module; Kasch ring; quasi-Frobenious ring.}
\begin{document}

	\begin{abstract}It is well known that a ring $R$ is right Kasch if each simple right $R$-module embeds in a projective right $R$-module. In this paper we study the dual notion and call a ring $R$ right dual Kasch if each simple right $R$-module is a homomorphic image of an injective right $R$-module. We prove that $R$ is right dual Kasch if and only if every finitely generated projective right $R$-module is coclosed in its injective hull.  Typical examples of dual Kasch rings are self-injective rings, V-rings and commutative perfect rings. Skew group rings of dual Kasch rings by finite groups are dual Kasch if the order of the group is invertible. Many examples are given to separate the notion of Kasch and dual Kasch rings. It is shown that commutative Kasch rings are dual Kasch, and a commutative ring with finite Goldie dimension is dual Kasch if and only if it is a classical ring (i.e. every element is a zero divisor or invertible).  We obtain that, for a field $k$, a finite dimensional $k$-algebra is right dual Kasch if and only if it is left Kasch. We also discuss the rings over which every simple right module is a homomorphic image of its injective hull, and these rings are termed strongly dual Kasch. 
	\end{abstract}
	
	\maketitle
	
	\section{Introduction}
	
	In his work on Frobenius extension of rings \cite{Kasch}, Friederich Kasch considered rings, which he called $S$-rings, that satisfied the condition that every proper left ideal has a non-zero right annihilator and every proper right ideal has a non-zero left annihilator. In his honour, rings such that every proper right ideal has a non-zero left annihilator are called right Kasch rings, while the left Kasch condition being the analogous condition for proper left ideals.
	
	Right Kasch rings are intimately linked to Frobenius extensions, Frobenius algebras and more generally to quasi-Frobenius rings (QF). Many equivalent conditions for a ring $R$ to be right Kasch are known, e.g. $R$ is right Kasch if and only if its injective hull $E(R)$ is an injective cogenerator in Mod-$R$ if and only if  any simple right  $R$-module embeds into $R$.
	
	Recently C.M.Ringel proved in \cite{Ringel}*{3.7}, that over a finite dimensional Nakayama algebra any simple module is a submodule of an indecomposable module with finite projective dimension, and a factor module of an indecomposable module with finite injective dimension\footnote{a finite dimensional algebras $R$ is Nakayama if every indecomposable module has a unique composition series}. 
	Clearly those algebras would be Kasch if the projective dimension of the indecomposable modules in which simples embed is $0$ and allowing the projective dimension to be non-zero but finite seems to be a generalization of the Kasch condition. In this paper we are considering the dual condition of the Kasch condition, namely that every simple module is a factor module of an injective module, i.e. a module of injective dimension $0$. Self-injective rings and V-rings are obvious examples of dual Kasch rings and it became apparent from recent work by the first author in \cite{coneat}, that the dual Kasch condition can be characterized by saying that the ring $R$ is a coneat submodule of its injective hull $E(R)$ (see Theorem \ref{lem:subinjective}). Under additional conditions on $E(R)$, we show that being Kasch implies being dual Kasch and conversely. We prove that dual Kasch rings are invariant under Morita equivalence, but need not be invariant under factor rings.  Similar, to Shen's result in \cite{shen} showing that a group ring $RG$ is Kasch if and only if $R$ is Kasch and $G$ is a finite group, we show that if $G$ is a finite group whose order is invertible in $R$, then the skew group ring $R*G$ is dual Kasch if $R$ is so. In this case also the fixed ring $R^G$ is dual Kasch. 
	It is shown that, a ring $R$ is right self-injective if and only if $R$ right dual Kasch and $E(R)$ is projective (see Proposition \ref{prop:dualKasch_selfinjective}). As a consequence we obtain that,  $R$ is $QF$ if and only if $R$ is one-sided Noetherian, right dual Kasch and $E(R)$ is projective. We also consider a stronger condition by demanding that every simple module is a factor module of its own injective hull. In proposition \ref{prop:QF_Nakayama} we show that QF-rings are strongly dual Kasch if and only if their Nakayama permutation is the identity. 
	
	The third section is devoted to commutative rings, where we  show that any commutative ring that is Kasch or perfect is dual Kasch. Commutative rings with finite Goldie dimension are dual Kasch if and only if they are classical rings, i.e. rings that are equal to their own total ring of quotients, and for commutative Noetherian rings the Kasch and dual Kasch conditions are  equivalent.
	
	The final section is devoted to finite dimensional algebras, where we use duality to show that a finite dimensional algebra is left Kasch if and only if it is right dual Kasch and where we give an example to show that the dual Kasch condition is not left-right symmetric.
	
	All rings $R$ are associative, unital and non-trivial and modules are usually considered to be unital right $R$-modules. For a ring $R$, the Jacobson radical and the right socle of $R$ is denoted by $Jac (R)$ and $\soc(R)$ respectively. By $E(M)$ we shall denote the injective hull of $M$, where $M$ is a right module.

	\section{Kasch rings and their duals}
	
	In this section, we give several characterizations of (strongly) dual Kasch rings. We show that the ring $R$ is dual Kasch if and only if $R$ is a coclosed submodule of its injective hull. Right self-injective rings are dual Kasch. We prove that, $R$ is self-injective if and only if $R$ is right dual Kasch and $E(R)$ is projective. Right Artinian rings that are dual Kasch are characterized. We obtain that, a $QF$ ring is stronly dual Kasch if and only if its Nakayama permutation is the identity.  
	
	Recall that, ring $R$ is called \emph{right Kasch ring} if every simple right $R$-module embeds into $R$. The first part of the following theorem is well known. We include it for completeness.

	\begin{thm}\label{Kasch} Let $R$ be a ring and $E(R)$ the injective hull of $R$ as right $R$-module.
		\begin{enumerate}
			\item[(1)] The following statements are equivalent:
			\begin{enumerate}
				\item[(a)] Every simple right $R$-module is isomorphic to a submodule of a projective module.
				\item[(b)] Every simple right $R$-module embeds into $R$.
			\end{enumerate}
			\item[(2)] The following statements are equivalent:
			\begin{enumerate}
				\item[(a)] Every simple right $R$-module is isomorphic to a factor module of an injective module.
				\item[(b)] Every simple right $R$-module is a homomorphic image of $E(R)$.
			\end{enumerate}	
		\end{enumerate}
	\end{thm}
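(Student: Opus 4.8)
The plan is to handle the two parts in parallel, using the duality between ``submodule of a projective'' and ``factor module of an injective'', with $R$ in the role of the generic projective and $E(R)$ in the role of the generic injective. In each part the implication (b)$\Rightarrow$(a) is immediate, since $R$ is projective and $E(R)$ is injective; so the content is the implication (a)$\Rightarrow$(b).

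For part (1): suppose $S$ is simple with a monomorphism $g\colon S\hookrightarrow P$, $P$ projective. Write $P$ as a direct summand of a free module $R^{(I)}$, with split monomorphism $j\colon P\to R^{(I)}$, so that $jg\colon S\hookrightarrow R^{(I)}$. As $S$ is cyclic, its image lies in $\bigoplus_{i\in F}R$ for a finite $F\subseteq I$; composing with the coordinate projections $\pi_i$, not all $\pi_i\circ jg$ can be zero, and any nonzero one is a nonzero homomorphism from a simple module, hence injective. This yields $S\hookrightarrow R$.

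For part (2): suppose $S$ is simple and $g\colon E\to S$ is an epimorphism with $E$ injective. Pick $e\in E$ with $g(e)\neq 0$, so $g(e)$ generates $S$. The map $R\to E$, $r\mapsto er$, extends along the inclusion $R\hookrightarrow E(R)$ to a homomorphism $\phi\colon E(R)\to E$ with $\phi(1)=e$, because $E$ is injective. Then $g\phi\colon E(R)\to S$ has $g\phi(1)=g(e)\neq 0$, so $g\phi\neq 0$, and since $S$ is simple $g\phi$ is onto; thus $S$ is a homomorphic image of $E(R)$.

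I expect part (1) to be routine. The subtle point is in part (2): one cannot simply dualize the argument of (1), because there is no ``cofree'' analogue of a free module — every injective being a direct summand of a product $E(R)^{I}$ would amount to $E(R)$ being an injective cogenerator, i.e.\ to $R$ being Kasch. Instead one uses injectivity of $E$ to lift the cyclic presentation $R\to E$, $1\mapsto e$, along $R\hookrightarrow E(R)$, and then exploits that, once the composite $g\phi$ is nonzero, simplicity of $S$ forces it to be surjective — so $\phi$ itself need not be surjective. That observation is the only non-formal ingredient.
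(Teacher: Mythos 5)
Your proposal is correct and follows essentially the same route as the paper: for (1) embed the simple into a free module and use a coordinate projection, and for (2) extend the cyclic map $R\to E$, $r\mapsto er$, along $R\hookrightarrow E(R)$ by injectivity of $E$ and observe that the nonzero composite onto a simple module is surjective. The extra remarks (finiteness of the support in (1), the absence of a ``cofree'' dual in (2)) are accurate but not needed beyond what the paper already does.
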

	\begin{proof}
		(1)  If $(a)$ holds and $S$ is a simple module isomorphic to a submodule of a projective module $P$, then as projective modules are direct summands of free modules, there exists a free module $R^{(\Lambda)}$ and an embedding $f:S \rightarrow R^{(\Lambda)}$. If $\pi_\lambda: R^{(\Lambda)}\rightarrow R$ denotes the projection onto the $\lambda$-component, for $\lambda\in \Lambda$. Then there must exists at least one such $\lambda$ such that $\pi_\lambda\circ f :S\rightarrow R$ is non-zero. Hence $S$ is isomorphic to a submodule of $R$, i.e. $(b)$ holds. The implication $(b)\Rightarrow (a)$ is trivial.
		
		(2) If $(a)$ holds and $S$ is a simple module isomorphic to a factor module of an injective module $E$, then there exists an epimorphism $f:E\rightarrow S$. Moreover, there exists an element $0\neq x\in E$ with $f(xR)=S$. The homomorphism $g:R\rightarrow E$ with $g(r)=xr$ can be extended to a homomorphism $\overline{g}:E(R)\rightarrow E$ such that $\overline{g}(1)=x$ and therefore $f\overline{g}:E(R)\rightarrow S$ is an epimorphism. This shows $(b)$. The implication $(b)\Rightarrow (a)$ is trivial.
	\end{proof}
	
	Condition (1.b) is precisely the condition that defines a ring to be right Kasch.  Let us call rings that satisfy (2.a) {\bf right dual Kasch rings}. Trivial examples of dual Kasch rings are right $V$-rings, since every simple is its own injective hull and right self-injective rings $R$, since any simple is a homomorphic image of $R$. There exists Noetherian V-rings that are domains but not division rings (Cozzens' example). Hence these rings  are right dual Kasch but not right Kasch.
	We will show in the last section that the usual duality for finite dimensional algebras can be used to show that a finite dimensional algebra is left Kasch if and only if it is right dual Kasch. This also allows to give an example of a left dual Kasch that is not right dual Kasch.
	
	\begin{cor}\label{cor:Morita_dualKasch}
		The Kasch and dual Kasch conditions are invariant under Morita equivalence.
	\end{cor}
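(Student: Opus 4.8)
The plan is to show that both the Kasch and dual Kasch conditions can be expressed purely in terms of the module categories $\mathrm{Mod}\text{-}R$ and $\mathrm{Mod}\text{-}S$ for Morita equivalent rings $R$ and $S$, so that an equivalence $F:\mathrm{Mod}\text{-}R\to\mathrm{Mod}\text{-}S$ transports one condition to the other. The key observation is that, by Theorem \ref{Kasch}(1), $R$ is right Kasch if and only if every simple object of $\mathrm{Mod}\text{-}R$ embeds in a projective object, and by Theorem \ref{Kasch}(2), $R$ is right dual Kasch if and only if every simple object of $\mathrm{Mod}\text{-}R$ is a factor object of an injective object. Both of these are categorical statements, phrased entirely in terms of monomorphisms, epimorphisms, simple objects, projective objects and injective objects.

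First I would recall that a Morita equivalence $F:\mathrm{Mod}\text{-}R\to\mathrm{Mod}\text{-}S$ is an additive equivalence of categories, hence it preserves and reflects monomorphisms and epimorphisms (being an equivalence, it is exact), it sends simple modules to simple modules bijectively (up to isomorphism), and it sends projective modules to projective modules and injective modules to injective modules, with the respective inverse statements holding for a quasi-inverse $G$ of $F$. Then, given a simple right $R$-module $S_0$ embedding in a projective $P$, applying $F$ yields an embedding $F(S_0)\hookrightarrow F(P)$ with $F(S_0)$ simple and $F(P)$ projective; conversely any simple right $S$-module is isomorphic to some $F(S_0)$ and pulling back along $G$ shows the Kasch condition for $S$ implies it for $R$. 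The argument for the dual Kasch condition is formally identical with "embeds in" replaced by "is a quotient of" and "projective" replaced by "injective".

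The only mild subtlety — and the main point that deserves a sentence rather than being taken for granted — is that a Morita equivalence need not respect the distinguished object $R_R$ itself (the progenerator $P$ with $\mathrm{End}_R(P)\cong S$ plays that role on the other side), so one must use the intrinsic characterizations (1.a) and (2.a) rather than the literal definitions "$S_0\hookrightarrow R$" or "$S_0$ is a quotient of $E(R)$". Once one commits to the categorical formulations of Theorem \ref{Kasch}, there is essentially no obstacle: the proof reduces to the standard facts that equivalences of module categories preserve exactness, simplicity, projectivity and injectivity. I would therefore write the corollary's proof in one short paragraph, invoking Theorem \ref{Kasch} to replace each condition by its categorical form and then citing the preservation properties of Morita equivalences.
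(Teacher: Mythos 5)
Your proposal is correct and follows essentially the same route as the paper: both reduce the Kasch and dual Kasch conditions to their categorical forms (1.a) and (2.a) of Theorem \ref{Kasch} and then invoke the standard fact that a Morita equivalence is exact and preserves simple, projective and injective modules. No gaps.
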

	
	\begin{proof}
		Let $R$ and $S$ be Morita equivalent rings, then there exists a progenerator $P$ in $Mod-R$ such that $F=\mathrm{Hom}(P,-)$ is an equivalence between $Mod-R$ and $Mod-S$. In particular $F(X)$ is an injective resp. projective $S$-module if and only if $X$ is an injective resp. projective $R$-module  by \cite{wisbauer}*{46.3(1i)}. Furthermore, $F(X)$ is a simple $S$-module if and only if $X$ is a simple $R$-module by \cite{wisbauer}*{46.3(3ii)}. Moreover, since $F$ is an exact functor, a homomorphism $f:X\rightarrow Y$ is injective/surjective if and only if $F(f):F(X)\rightarrow F(Y)$ is injective/surjective. Hence by Theorem \ref{Kasch} $R$ is right Kasch (resp. right dual Kasch) if and only if $S$ is right Kasch (resp. right dual Kasch).
	\end{proof}
	\medskip
	
	Before characterizing dual Kasch rings we need some more terminology:  Following \cite{coneat}, a submodule $N$ of a  right $R$-module $M$ is said to be coneat in $M$ if for all simple right $R$-modules $S$, any homomorphism $N \to S$ can be extended to a homomorphism $M \to S$.  By \cite{coneat}*{Proposition 2.10}, if $N$ is finitely generated, then $N$ is coneat in $M$ if and only if $N$ is coclosed in $M$, where a submodule $N$ of $M$ is said to be coclosed in $M$ if $N/K \ll M/K$ implies $K=N$ for each submodule $K$ of $N$.
	
	

	
	\begin{thm}\label{lem:subinjective}  The following statements are equivalent for a ring $R$:
		\begin{enumerate}
			\item[(a)] $R$ is right dual Kasch.
			\item[(b)] Every finitely generated projective right $R$-module $P$ is coneat in $E(P)$.
			\item[(c)] $R$ is a coclosed submodule of $E(R)$.
		\end{enumerate}
		If any of these conditions hold, then any projective simple right module is injective.
	\end{thm}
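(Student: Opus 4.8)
The plan is to prove the cycle of implications $(a)\Rightarrow(b)\Rightarrow(c)\Rightarrow(a)$, using throughout the fact recalled above from \cite{coneat}*{Proposition 2.10} that a \emph{finitely generated} submodule is coneat if and only if it is coclosed; since $R_{R}$ and every finitely generated projective right module are finitely generated, this lets me pass freely between ``coneat'' and ``coclosed'' in those cases. Two of the implications are then immediate. First, $(b)\Rightarrow(c)$ is just the special case $P=R$. Second, for $(c)\Rightarrow(a)$: if $S$ is a simple right module then, being cyclic, $S$ admits an epimorphism $f\colon R\to S$; coneatness of $R$ in $E(R)$ extends $f$ to a homomorphism $\overline{f}\colon E(R)\to S$, which is still surjective, so $S$ is a homomorphic image of $E(R)$ and $R$ is right dual Kasch by Theorem \ref{Kasch}(2).

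The substance lies in $(a)\Rightarrow(b)$, whose core is the special case $(a)\Rightarrow$ ``$R$ is coneat in $E(R)$''. The delicate point here --- which I expect to be the main obstacle --- is that the dual Kasch hypothesis supplies only \emph{one} epimorphism $E(R)\to S$ for each simple $S$, a map that a priori has nothing to do with a prescribed homomorphism $f\colon R\to S$ and may even vanish on $R$, whereas coneatness demands that \emph{every} such $f$ extend. I would get around this by recycling the construction from the proof of Theorem \ref{Kasch}(2): starting from an epimorphism $\pi\colon E(R)\to S$, choose $x\in E(R)$ with $\pi(x)\neq0$ (so $\pi(xR)=S$), extend the $R$-homomorphism $a\mapsto xa$ (from $R$ into $E(R)$) to some $\psi\colon E(R)\to E(R)$ using injectivity of $E(R)$, and observe that $\pi\psi\colon E(R)\to S$ is surjective with $(\pi\psi)(1)=\pi(x)\neq0$; in particular $(\pi\psi)|_{R}\neq0$. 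Hence the set $T=\{\,f(1)\mid f\in\Hom(R,S)\text{ extends to }E(R)\,\}$ is a \emph{nonzero} subset of $S$. One then checks that $T$ is a submodule of $S$: it is an additive subgroup, since extensions of homomorphisms may be added and negated; and it is stable under the $R$-action, because if $f$ extends via $g$ and $r\in R$, then extending $a\mapsto ra$ to $\widetilde{\lambda}_{r}\colon E(R)\to E(R)$ and forming $g\widetilde{\lambda}_{r}$ produces an extension of the homomorphism $a\mapsto f(1)ra$, whose value at $1$ is $f(1)r$. As $S$ is simple and $T\neq0$, we get $T=S$: every homomorphism $R\to S$ extends to $E(R)$, and since $S$ was an arbitrary simple module, $R$ is coneat in $E(R)$. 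By \cite{coneat}*{Proposition 2.10} this already gives $(c)$.

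To bootstrap ``$R$ coneat in $E(R)$'' to the full statement $(b)$ is purely formal. Coneat submodules are closed under finite direct sums --- clear from $\Hom(A\oplus B,S)\cong\Hom(A,S)\oplus\Hom(B,S)$ and the compatibility of these isomorphisms with restriction --- and they pass to direct summands: if $N_{1}\oplus N_{2}$ is coneat in $M_{1}\oplus M_{2}$ and $g\colon N_{1}\to S$ is given, extend $g\oplus0\colon N_{1}\oplus N_{2}\to S$ to $M_{1}\oplus M_{2}$ and restrict to $M_{1}$. If now $P$ is finitely generated projective, write $R^{n}=P\oplus Q$; then $E(R^{n})=E(R)^{n}=E(P)\oplus E(Q)$, the module $R^{n}$ is coneat in $E(R)^{n}$ by the first property, and therefore $P$ is coneat in $E(P)$ by the second.

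Finally, for the closing assertion, let $S$ be a simple projective right $R$-module; it is cyclic, hence finitely generated projective, so by $(b)$ together with \cite{coneat}*{Proposition 2.10} it is coclosed in $E(S)$. If $S$ were not injective, then $S\subsetneq E(S)$ and $S$ would be small in $E(S)$: indeed, any $L\leq E(S)$ with $S+L=E(S)$ has $L\neq0$ (else $S=E(S)$), hence $L\cap S\neq0$ by essentiality of $S$, hence $S\subseteq L$ by simplicity, hence $L=E(S)$. But a submodule that is simultaneously coclosed in, and small in, the ambient module must be zero (take $K=0$ in the definition of coclosed), contradicting $S\neq0$. Therefore $S=E(S)$, i.e. $S$ is injective.
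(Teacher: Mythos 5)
Your proof is correct, but for the main implication $(a)\Rightarrow(b)$ it takes a genuinely different route from the paper. The paper dispatches the "delicate point" you identify in one stroke: given a nonzero $f\colon P\to S$ and the epimorphism $g\colon E(R)\to S$ supplied by the dual Kasch hypothesis, it uses the \emph{projectivity of $P$} to lift $f$ through $g$, obtaining $h\colon P\to E(R)$ with $f=gh$, and then extends $h$ to $\overline{h}\colon E(P)\to E(R)$ by injectivity of $E(R)$; the composite $g\overline{h}$ restricts to $f$ on $P$, so every homomorphism $P\to S$ extends and $P$ is coneat in $E(P)$ for \emph{arbitrary} projective $P$, with no reduction steps. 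You instead first treat $P=R$ by showing that the set $T$ of values at $1$ of extendable homomorphisms $R\to S$ is a nonzero submodule of the simple module $S$, and then bootstrap to finitely generated projectives via closure of coneatness under finite direct sums and direct summands; this is longer but sound (your verification that $T$ is $R$-stable via extending $a\mapsto ra$ to $E(R)$ is the right computation), and it has the mild virtue of isolating exactly where projectivity enters. Your $(c)\Rightarrow(a)$ goes through coneatness via \cite{coneat}*{Proposition 2.10}, whereas the paper argues directly from coclosedness that $R/M$ is not small in $E(R)/M$ and hence splits off; and for the final claim the paper splits the epimorphism $E(R)\to S$ using projectivity of $S$, while you derive a contradiction from $S$ being simultaneously small and coclosed in $E(S)$ --- all equivalent in substance.
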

	
	\begin{proof} 
		$(a)\Rightarrow (b)$
		Let $S$ be a simple right $R$-module, $P$ be a projective right $R$-module and $f:P\rightarrow S$ any non-zero homomorphism. Since $R$ is right dual Kasch, there is a nonzero homomorphism $g: E(R) \to S$. Note that, $f$ and $g$ are surjective, because $S$ is simple. Then by the projectivity of $P$, there is a nonzero $h: P \to E(R)$ such that $f=gh$ and by the  injectivity of $E(R)$, there is a homomorphism $\overline{h}: E(P) \to E(R)$ which extends $h$. Then $g\overline{h}(P)=gh(P)=f(P)=S$. Hence $g\overline{h}:E(P)\to S$ is non-zero.
		
		\[
		\xymatrix{ 
			0\ar[r] & P \ar[dr]^f \ar[d]_h \ar[r] & E(P)\ar@{-->}[ld]\ar@{-->}[d]^{g\overline{h}} & \\ 
			&E(R) \ar[r]^g & S\ar[r] &  0 
		}\]
		Hence $P$ is coneat in $E(P)$.
		
		$(b) \Rightarrow (c)$ follows from \cite{coneat}*{Proposition 2.10}.

		$(c)\Rightarrow (a)$ If $R$ is coclosed in $E(R)$, then for any maximal right ideal $M\leq R$ we have that $R/M$ is not small in $E(R)/M$. Since $R/M$ is simple and not small, there exists a submodule $U/M \subsetneq E(R)/M$ such that $R/M \oplus U/M = E(R)/M$. Hence $\mathrm{Hom}(E(R),R/M)\neq 0$.
		\medskip
		
		If $R$ is dual Kasch then for any  simple right $R$-module $S$ there exists a non-zero $f:E(R)\to S$, which must be surjective as $S$ is simple. If $S$ is projective, then there exists $U\subseteq E(R)$ such that  $U\oplus \mathrm{Ker}(f)=E(R)$ showing that $S\simeq U$ is injective.
	\end{proof}

	Recall that a right module $M$ is called \emph{retractable} if $\Hom(M, K)\neq 0$ for every non-zero submodule $K$ of $M$. A right module $M$ is called \emph{coretratable} if $\Hom(M/K,M)\neq 0$ for every proper submodule $K$ of $M$.

	\begin{prop} Let $R$ be a ring with injective hull $E(R)$.
		\begin{enumerate}
			\item  If $E(R)$ is retractable and $R$ is right Kasch, then $R$ is right dual Kasch.
			\item If $E(R)$ is coretractable and $R$ is right dual Kasch, then $R$ is right Kasch.
		\end{enumerate}
	\end{prop}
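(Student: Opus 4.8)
The plan is to reduce both statements to the characterization of the dual Kasch property recorded in Theorem \ref{Kasch}(2): a ring $R$ is right dual Kasch exactly when every simple right $R$-module is a homomorphic image of $E(R)$. With that reformulation in hand, each direction becomes a short chase using essentiality of $R$ in $E(R)$ together with the assumed (co)retractability.

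For (1), I would fix an arbitrary simple right $R$-module $S$. Since $R$ is right Kasch, $S$ is isomorphic to a submodule of $R$, and via the inclusion $R\subseteq E(R)$ we obtain a nonzero submodule $K\cong S$ of $E(R)$. Because $E(R)$ is retractable, $\Hom(E(R),K)\neq 0$, so there is a nonzero homomorphism $g\colon E(R)\to K\cong S$; as $S$ is simple, $g$ is surjective. Thus $S$ is a homomorphic image of $E(R)$, and Theorem \ref{Kasch}(2) gives that $R$ is right dual Kasch.

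For (2), start with a simple right $R$-module $S$. Since $R$ is right dual Kasch, Theorem \ref{Kasch}(2) provides an epimorphism $f\colon E(R)\to S$; put $K=\ker f$, a proper submodule of $E(R)$, so that $E(R)/K\cong S$. Coretractability of $E(R)$ yields a nonzero homomorphism $E(R)/K\cong S\to E(R)$, and since $S$ is simple this map is injective, so $S$ embeds in $E(R)$. The remaining, non-formal step is to transport this embedding into $R$ itself: the image of $S$ is a nonzero (indeed simple) submodule of $E(R)$, and since $R$ is essential in $E(R)$ this image meets $R$ nontrivially, hence — being simple — lies entirely inside $R$. Therefore $S$ embeds in $R$, and $R$ is right Kasch.

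I do not expect a genuine obstacle here; the only subtlety is the last move in part (2), where one must remember that coretractability only delivers an embedding into $E(R)$ and it is the essentiality of $R$ in $E(R)$ that upgrades this to an embedding into $R$. Everything else is a direct application of the definitions of retractable/coretractable modules and of Theorem \ref{Kasch}(2).
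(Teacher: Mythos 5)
Your proof is correct and follows essentially the same route as the paper's: embed $S$ in $E(R)$ via the Kasch hypothesis and apply retractability for (1), and use coretractability plus essentiality of $R$ in $E(R)$ to pull the simple module back into $R$ for (2). No gaps.
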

	
	\begin{proof} 
		(1) Let $S$ be a simple right $R$-module. Then $S$ can be embedded in $R$, hence in $E(R)$ and by  assumption  $\Hom(E(R), \,S) \neq 0$. Thus $S$ is a homomorphic image of $E(R)$, and so $R$ is right dual Kasch. 
		
		(2) Let $S$ be a simple right $R$-module. Since $R$ is right dual Kasch, there exists an epimorphism $f: E(R) \to S$. By assumption,  $\Hom (E(R)/Ker f,\,E(R))\neq 0$. Thus $E(R)$ contains a copy of the simple module $E(R)/Ker f \cong S$. Since  $R$ is essential in $E(R)$, $S$ embeds into $R$.
	\end{proof}

	In Corollary \ref{cor:duality_Kasch} we will show that finite dimensional algebras are dual Kasch on one side if and only if they are dual Kasch on the other side and conclude that there are left dual Kasch rings that are not right dual Kasch (see Example \ref{exa:leftNotRight}). We will also see that any commutative perfect ring is dual Kasch. Hence, commutative Artinian rings are dual Kasch. There are one-sided Artinian rings that are not dual Kasch. Here we would like to mention a criterion for a general one-sided Artinian ring to be dual Kasch.
	
	\begin{prop} Let $R$ be a right Artinian ring with Jacobson radical $J=Jac(R)$ and decomposition $R=e_1 R \oplus \cdots e_nR$ with each $e_iR$ is a local right $R$-module\footnote{$e_iR$ is local means that  $e_iJ$ is the unique maximal submodule of $e_iR$}. Then $R$ is right dual Kasch if and only if $\Hom_R(E(e_iR), \, e_iR/e_i J)\neq 0$ for each $i=1,\cdots , n.$
	\end{prop}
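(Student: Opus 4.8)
The plan is to deduce the statement from Theorem~\ref{lem:subinjective} and Theorem~\ref{Kasch}(2) together with the elementary structure theory of semiperfect rings. First I would recall that a right Artinian ring is semiperfect, so the decomposition $R=e_1R\oplus\cdots\oplus e_nR$ into local right ideals exists and, more importantly, the modules $S_i:=e_iR/e_iJ$ for $i=1,\dots,n$ constitute a (possibly redundant) complete list of the simple right $R$-modules: given a simple module $S$, pick $0\neq s\in S$; since $\sum_i e_iR=R$ at least one of the maps $e_iR\to S$, $x\mapsto sx$, is nonzero, hence onto, and because $e_iJ$ is the unique maximal submodule of $e_iR$ this forces $S\cong S_i$. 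I would also use that injective hulls commute with finite direct sums, so that $E(R)=E(e_1R)\oplus\cdots\oplus E(e_nR)$ with the associated projections $p_i\colon E(R)\to E(e_iR)$.

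For the implication ``$R$ right dual Kasch $\Rightarrow$ $\Hom_R(E(e_iR),S_i)\neq 0$ for every $i$'', I would invoke Theorem~\ref{lem:subinjective}: each $e_iR$ is a finitely generated projective right $R$-module, hence coneat in $E(e_iR)$. The canonical epimorphism $\pi_i\colon e_iR\to S_i$ is a nonzero homomorphism onto a simple module, so by coneatness it extends to a homomorphism $\overline{\pi_i}\colon E(e_iR)\to S_i$; this $\overline{\pi_i}$ is nonzero because it restricts to $\pi_i$ on $e_iR$. Therefore $\Hom_R(E(e_iR),S_i)\neq 0$.

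For the converse, assume $\Hom_R(E(e_iR),S_i)\neq 0$ for all $i$ and let $S$ be an arbitrary simple right $R$-module. By the first paragraph $S\cong S_i$ for some $i$; choosing a nonzero map $g\colon E(e_iR)\to S_i$ and composing with the projection $p_i$ yields a map $g\circ p_i\colon E(R)\to S_i\cong S$, which is nonzero since $p_i$ is surjective and $g\neq 0$. Thus every simple right $R$-module is a homomorphic image of $E(R)$, and $R$ is right dual Kasch by Theorem~\ref{Kasch}(2).

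I do not anticipate a genuine obstacle: the substance is already packaged in Theorem~\ref{lem:subinjective}, and the rest is bookkeeping with the semiperfect decomposition. The only points needing a moment's care are that the extension $\overline{\pi_i}$ stays nonzero (which it does, being an extension of the nonzero map $\pi_i$) and that in the backward direction the hypothesis is indexed by exactly the simples $S_i$, so no re-indexing is needed. It is worth noting in passing that the argument only uses that $R$ is semiperfect, so the same proof gives the statement for semiperfect rings, of which right Artinian rings are a special case.
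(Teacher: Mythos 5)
Your proof is correct and follows essentially the same route as the paper: the necessity is obtained from Theorem~\ref{lem:subinjective} via coneatness of $e_iR$ in $E(e_iR)$ (the paper phrases this as surjectivity of the restriction map $\Hom(E(e_iR), e_iR/e_iJ)\to\Hom(e_iR, e_iR/e_iJ)$), and the sufficiency from the fact that the $e_iR/e_iJ$ exhaust the simple right $R$-modules. You merely spell out details the paper leaves implicit, and your observation that only semiperfectness is used is accurate.
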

	
	\begin{proof} Suppose $R$ is right dual Kasch and let $J=Jac(R)$ Then, by Theorem \ref{lem:subinjective}, $e_iR$ is coneat in its injective hull $E(e_iR)$, for each $i=1,\cdots ,n$. Thus the map 
		$$\Hom(E(e_iR),\, e_iR/e_iJ) \to \Hom(e_iR,\, e_iR/e_iJ) \to 0$$ is surjective. Since $\Hom(e_iR,\, e_iR/e_iJ)\neq 0$, we have $\Hom(E(e_iR),\, e_iR/e_iJ)\neq 0$. This proves the necessity. The sufficiency is clear, because each simple right $R$-module is isomorphic to $e_iR/e_iJ$ for some $i=1,\cdots, n$.
	\end{proof}
	
	Let $R$ denote the ring of upper triangular $n\times n$ matrices over a field $K$, with $n\geq 2$. Then $R=\bigoplus_{i=1}^n e_iR$, where $e_i$ is the matrix that has $1$ in position $(i,i)$ and zero elsewhere.  Then $e_nR=e_nK$ is simple and projective and $e_1R = E(e_nR)$ is the injective hull of $e_nR$. As  
	$\Hom(E(e_nR), e_nR/e_nJ) = \Hom(e_1R, e_nR)=0$, $R$ is not right dual Kasch.
	
	\medskip

	
	While for a dual Kasch ring $R$, any simple module is a homomorphic image of $E(R)$ one might wonder whether they are also homomorphic images of their own injective hull.

	\begin{prop} The following conditions are equivalent for a ring $R$.
		\begin{enumerate}
			\item[(a)]  Every simple right module $S$ is a homomorphic image of its injective hull $E(S)$. 
			\item[(b)] $E(S)$ is retractable for every simple right $R$-module $S$.
		\end{enumerate}
		If one of these conditions hold, $R$ is called {\bf right strongly dual Kasch}.
	\end{prop}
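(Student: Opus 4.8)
The plan is to exploit the one structural feature of the injective hull of a simple module that matters here: $S$ embeds in $E(S)$ as an essential submodule, and since $S$ is simple this forces $S$ to be contained in \emph{every} nonzero submodule of $E(S)$. Indeed, if $0\neq K\leq E(S)$ then $S\cap K\neq 0$ by essentiality, and $S\cap K=S$ by simplicity of $S$, so $S\subseteq K$. With this observation in hand, both implications are almost immediate, and I would present them in the order $(a)\Rightarrow(b)$ then $(b)\Rightarrow(a)$.

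For $(a)\Rightarrow(b)$, I would fix a simple right $R$-module $S$ and an arbitrary nonzero submodule $K\leq E(S)$, and produce a nonzero element of $\Hom(E(S),K)$. By hypothesis there is an epimorphism $p\colon E(S)\to S$; by the observation above $S\subseteq K$, so composing $p$ with the inclusion $S\hookrightarrow K$ gives a nonzero homomorphism $E(S)\to K$. Hence $E(S)$ is retractable.

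For $(b)\Rightarrow(a)$, I would simply apply retractability of $E(S)$ to the particular nonzero submodule $S\leq E(S)$: this yields a nonzero homomorphism $f\colon E(S)\to S$, and since $S$ is simple, $f$ is automatically surjective. Thus $S$ is a homomorphic image of $E(S)$.

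There is no genuine obstacle in this argument; the only point that needs to be stated carefully is the essentiality-plus-simplicity reduction showing that every nonzero submodule of $E(S)$ contains $S$, and everything else is a one-line diagram chase. (One could also remark that both conditions hold trivially when $R$ is a right $V$-ring, since then $E(S)=S$, which connects this notion with the examples discussed earlier, but that is a comment rather than part of the proof.)
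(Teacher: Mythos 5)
Your proposal is correct and follows essentially the same route as the paper: the key point in both is that essentiality of $S$ in $E(S)$ together with simplicity forces $S$ to lie in every nonzero submodule $K$, so the epimorphism $E(S)\to S$ composed with $S\hookrightarrow K$ witnesses retractability, while the converse is the one-line application of retractability to the submodule $S$ itself. No gaps.
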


	\begin{proof} $(a)\Rightarrow (b)$  Let $S$ be a simple right $R$-module and $K$ a nonzero submodule of $E(S)$. Since $S$ is essential in $E(S)$, $S$ is contained in $K$. By $(a)$, $\Hom(E(S), S)\neq 0$, which implies that, $\Hom(E(S), K)\neq 0$. Thus $E(S)$ is retractable.
		
		$(b)\Rightarrow (a)$ is trivial
	\end{proof}
	Obviously, strongly dual Kasch rings are dual Kasch and Example \ref{exa:dualnotstrongly} will provide an example of a dual Kasch ring that is not strongly dual Kasch. 
	
	\begin{prop}
		The class of strongly dual Kasch rings is Morita invariant.
	\end{prop}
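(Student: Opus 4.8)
The plan is to transport the characterization ``$E(S)$ is retractable for every simple right $R$-module $S$'' across a Morita equivalence, in the same spirit as the proof of Corollary~\ref{cor:Morita_dualKasch}. Let $R$ and $S$ be Morita equivalent rings and let $F=\Hom(P,-): \Mod{R}\to \Mod{S}$ be an equivalence given by a progenerator $P$. As recorded there, $F$ is exact, carries simple (resp.\ injective) modules to simple (resp.\ injective) modules and reflects these properties, and turns monomorphisms (resp.\ epimorphisms) into monomorphisms (resp.\ epimorphisms).

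The one extra ingredient I need is that an equivalence preserves essential extensions, and hence injective hulls. Indeed $F$ induces a bijection between the submodule lattices of $M$ and $F(M)$ compatible with intersections: if $N\le M$ is essential and $L$ is a nonzero submodule of $F(M)$, write $L\cong F(K)$ with $0\ne K\le M$; then $K\cap N\ne 0$, so $F(K)\cap F(N)=F(K\cap N)\ne 0$, and thus $F(N)$ is essential in $F(M)$. Since $F$ also preserves injectivity, $F(E(M))\cong E(F(M))$ for every $M$. (Alternatively, one may simply observe that ``for every simple $S$, the injective hull $E(S)$ is retractable'' is a property formulated purely in terms of objects, subobjects and hom-sets, hence preserved by any category equivalence.)

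Now suppose $R$ is right strongly dual Kasch and let $T$ be a simple right $S$-module. Since $F$ is an equivalence, $T\cong F(S_0)$ for some right $R$-module $S_0$, which is simple because $F$ reflects simplicity; moreover $F(E(S_0))$ is an injective essential extension of $F(S_0)\cong T$, so $E(T)\cong F(E(S_0))$. Given a nonzero submodule $L$ of $E(T)$, write $L\cong F(K)$ with $0\ne K\le E(S_0)$. As $R$ is right strongly dual Kasch, $E(S_0)$ is retractable, so $\Hom_R(E(S_0),K)\ne 0$; applying $F$ gives $\Hom_S(E(T),L)\cong \Hom_R(E(S_0),K)\ne 0$. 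Hence $E(T)$ is retractable for every simple right $S$-module $T$, i.e.\ $S$ is right strongly dual Kasch. The converse is the same argument applied to the equivalence $F^{-1}$.

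The only step that is not a purely formal transport along $F$ is the preservation of essentiality, and hence of injective hulls, under an equivalence; this is standard, and the rest is bookkeeping identical in structure to Corollary~\ref{cor:Morita_dualKasch}.
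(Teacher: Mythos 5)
Your proof is correct and follows essentially the same route as the paper's: both arguments transport the injective hull of a simple module across the equivalence (using that Morita equivalences preserve injectivity and essential monomorphisms, which the paper cites from Wisbauer 46.3 and you verify directly via the induced isomorphism of submodule lattices) and then transport the relevant nonzero Hom-set. The only cosmetic difference is that you phrase the conclusion through the retractability characterization of $E(S)$ rather than directly exhibiting a nonzero map $E(S)\to S$, which is equivalent by the defining proposition.
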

	\begin{proof}
		Let $R$ and $S$ be rings, $F:Mod-R \rightarrow Mod-S$ a Morita equivalence with inverse $G:Mod-S\rightarrow Mod-R$ such that $S$ is right dual Kasch, then for any simple right $R$-module $X$, let $E_S(F(X))$ be the injective hull of the simple right $S$-module $F(X)$. Let $i:F(X)\rightarrow E_S(F(X))$ be a essential monomorphism. Since $S$ is right dual Kasch, there exists a non-zero homomorphism of right $S$-modules $f:E_S(F(X))\rightarrow F(X)$. By \cite{wisbauer}*{46.3}, $G(E_S(F(X))$ is an injective right $R$-module and $G(i): X\simeq G(F(X)) \rightarrow G(E_S(F(X)))$ is an essential monomorphism. Hence $G(E_S(F(X))) =: E_R(X)$ is an injective hull of $X$. Furthermore, the non-zero homomorphism $f:E_S(F(X))\rightarrow F(X)$ yields a non-zero homomorphism $G(f):E_R(X) = G(E_S(F(X))) \rightarrow G(F(X))\simeq X$. This shows that $R$ is right strongly dual Kasch.
	\end{proof}

	Any commutative perfect ring is actually strongly dual Kasch as we will prove now. Recall that a ring $R$ is called a right $H$-ring if $\mathrm{Hom}(E(S),E(T))=0$, for any non-isomorphic simple right $R$-modules $S,T$. A right max-ring is a ring such that any non-zero module contains a maximal submodule.
	
	\begin{prop}\label{prop:Hmax-strdualKasch} If $R$ right $H$-ring and right max-ring, then $R$ is strongly dual Kasch. In particular any commutative perfect ring is strongly dual Kasch. \end{prop}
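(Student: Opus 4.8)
The plan is to reduce, via the characterization in the preceding proposition, to showing that $\Hom(E(S),S)\neq 0$ for every simple right $R$-module $S$. So fix such an $S$. Since $R$ is a right max-ring, the nonzero module $E(S)$ possesses a maximal submodule $M$; put $T:=E(S)/M$, a simple right $R$-module, and let $p\colon E(S)\twoheadrightarrow T$ be the canonical epimorphism. Composing $p$ with an essential monomorphism $T\hookrightarrow E(T)$ yields a homomorphism $E(S)\to E(T)$ which is nonzero, precisely because $p$ is surjective onto the nonzero simple module $T$. Now the right $H$-ring hypothesis forces $S\cong T$, for otherwise $\Hom(E(S),E(T))$ would vanish. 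Hence $p$ realizes $S\cong E(S)/M$ as a homomorphic image of $E(S)$, and $R$ is right strongly dual Kasch.

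For the second assertion the plan is simply to check that a commutative perfect ring $R$ satisfies the two hypotheses of the first part. Such a ring decomposes as a finite direct product $R=R_1\times\cdots\times R_k$ of commutative local perfect rings; accordingly every simple right $R$-module is inflated from the unique simple module of exactly one factor $R_i$, and the injective hull taken over $R$ coincides with the one taken over that $R_i$. Two non-isomorphic simple modules then either come from the same factor $R_i$ --- impossible, as the local ring $R_i$ has a single simple module up to isomorphism --- or from distinct factors, in which case the central idempotent cutting out one factor acts as the identity on one of the two injective hulls and as zero on the other, so every homomorphism between them is zero; thus $R$ is a right $H$-ring. For the max-ring property I would invoke the T-nilpotence of $Jac(R)$ (which holds because $R$ is perfect): by Bass's lemma $M\,Jac(R)\neq M$ for every nonzero right $R$-module $M$, and since $R$ is semilocal one has $\mathrm{Rad}(M)=M\,Jac(R)\subsetneq M$, so $M$ admits a maximal submodule. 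The first part of the proof now applies.

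I expect the genuinely formal part --- the short composition argument bridged by the max-ring and $H$-ring conditions --- to present no difficulty; the load-bearing inputs are rather the structure-theoretic facts specialized to commutative perfect rings, namely that such a ring is a finite product of local perfect rings and that T-nilpotence of the radical forces the max-ring condition. The only subtle point inside the core argument is verifying that the composite $E(S)\to T\hookrightarrow E(T)$ is nonzero before the $H$-ring hypothesis is brought in, and this is immediate from surjectivity of $E(S)\twoheadrightarrow T$.
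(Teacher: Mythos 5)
Your core argument for the first assertion is exactly the paper's: the max-ring hypothesis yields a maximal submodule $K$ of $E(S)$, and the $H$-ring hypothesis forces $E(S)/K\cong S$ (the paper states this in one line; you correctly unpack it as the nonvanishing of the composite $E(S)\twoheadrightarrow E(S)/K\hookrightarrow E(E(S)/K)$). Where you diverge is in the ``in particular'' clause: the paper simply asserts that commutative perfect rings are max rings and cites Camillo's result that they are $H$-rings, whereas you prove both facts directly --- decomposing the commutative perfect (hence semiperfect) ring into a finite product of local perfect rings, observing that injective hulls of non-isomorphic simples live over distinct factors so that the central idempotents kill any homomorphism between them, and deriving the max-ring property from T-nilpotence of the radical via Bass's lemma together with $\mathrm{Rad}(M)=M\,Jac(R)$ for semilocal rings. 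Both routes are correct; the citation is shorter, while your argument is self-contained and makes transparent exactly which structural features of commutative perfect rings are being used.
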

	
	\begin{proof}Let $S$ be a simple right module. Then $E(S)$ has a maximal submodule, say $K$, by the max-ring assumption. Then $E(S)/K \cong S$, because $R$ is a right $H$-ring. Hence, $R$ is strongly dual Kasch.
		Commutative perfect rings are max rings. By  \cite{camillo}*{Proposion 2}
		they are also $H$-rings. Hence the conclusion follows.
	\end{proof}

	In general we have the implication
	\[
	\xymatrix{
		&&&& \mbox{ self-injective }\ar@{=>}[d]\\
		\mbox{$V$-ring } \ar@{=>}[rr] &&  \mbox{ strongly dual Kasch } \ar@{=>}[rr] && \mbox{dual Kasch.}
	}
	\]
	
	All of these implications are proper. How far is a strongly dual Kasch ring from being a $V$-ring? 
	Note that every projective simple right $R$-module $V$ over a right dual Kasch ring $R$ must be injective. Recall that a right generalized $V$-ring (for short right GV-ring) is a ring such that any simple is injective or projective. Hence right $V$ rings are precisely the right $GV$-rings that are dual Kasch. 
	
	\begin{prop}\label{prop:hereditary} A right dual Kasch ring $R$ is a right $V$-ring if it is right hereditary or a right GV-ring.
	\end{prop}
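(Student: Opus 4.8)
The plan is to show, in either hypothesis, that every simple right $R$-module $S$ is injective; this is precisely the assertion that $R$ is a right $V$-ring. Both cases reduce to facts already at hand, so the argument will be short.

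First I would dispose of the right $GV$-ring case. By definition of a right $GV$-ring, an arbitrary simple right $R$-module $S$ is either injective or projective. If $S$ is injective there is nothing to do; if $S$ is projective, then the final clause of Theorem \ref{lem:subinjective} — which states that over a right dual Kasch ring every projective simple right module is injective — applies and gives that $S$ is injective. So the only content here is to invoke that already-established fact.

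For the right hereditary case, the key observation is that the dual Kasch hypothesis makes each simple $S$ a homomorphic image of an injective right module $E$, i.e. $S\cong E/K$ for some submodule $K\leq E$. I would then use the classical homological characterization of right hereditary rings: $R$ is right hereditary if and only if every factor module of an injective right $R$-module is again injective (equivalently, the right global dimension is at most $1$, so that $\mathrm{Ext}^1_R(M,E/K)\cong \mathrm{Ext}^2_R(M,K)=0$ for every right module $M$). Applying this to $S\cong E/K$ yields that $S$ is injective. Combining the two cases, every simple right $R$-module is injective, hence $R$ is a right $V$-ring.

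The only non-elementary ingredient, and thus the main (minor) obstacle, is the characterization of right hereditary rings by the property that quotients of injectives are injective; since this is standard I would simply cite it (e.g. from \cite{wisbauer}) rather than reprove it. Everything else is a direct bookkeeping with the definitions of $GV$-ring and dual Kasch together with Theorem \ref{lem:subinjective}.
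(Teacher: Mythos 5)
Your proof is correct and follows essentially the same route as the paper: for the hereditary case, use that quotients of injectives are injective to conclude each simple (being a homomorphic image of an injective by the dual Kasch hypothesis) is injective; for the GV case, invoke the final clause of Theorem \ref{lem:subinjective} that projective simples over a dual Kasch ring are injective. The only difference is that you spell out the standard homological justification for the hereditary characterization, which the paper leaves implicit.
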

	
	\begin{proof} Let $S$ be a simple right module. Then $S$ is a homomorphic image of an injective module. Thus $S$ is injective if $R$ is right hereditary assumption. Hence $R$ is right $V$-ring.
		
		By Theorem \ref{lem:subinjective}, projective simple modules over a dual Kasch ring $R$ are injective. Hence if $R$ is a $GV$ ring, it must be a $V$ ring.
	\end{proof}

	It is known that for a finite group $G$ and a ring $R$, the group ring $RG$ is right self-injective (QF) if and only if $R$ is right self-injective (QF) (see for example \cite{Lam}*{Exercicse 15.14} or \cite{jain}, \cite{connell}).
	In \cite{shen} it has been proved that a group ring $RG$ is left Kasch if and only if $R$ is left Kasch and $G$ is finite. Here we prove a similar statement for rings that are right dual Kasch.
	Recall that one says that a group $G$ acts on a ring $R$ by automorphism if there exists a group homomorphism $G\to \mathrm{Aut}(R)$ to the group of automorphisms of $R$. Hence each element $g\in G$ acts on an element $a\in R$ as an automorphism and we denote this action by $g\cdot a$. The skew group ring of $R$ and $G$ is the free left $R$-module with basis $\{ \overline{g} : g\in G\}$ and denoted by $R*G$. Multiplication in $R*G$ is determined by $$a\overline{g} b\overline{h} := a (g\cdot b) \overline{gh}, \qquad \forall a,b\in R, g,h\in G.$$
	A group ring $RG$ is a skew group ring, where the action of an element $g$ on an element $a$ is always trivial, i.e. $g\cdot a = a$. 
	Note that $\imath: R \to R*G$ with $r\mapsto r\overline{e}$ is an injective ring homomorphism, where $e$ is the neutral element of the $G$. Moreover, $R*G = \bigoplus_{g\in G} R\overline{g}$ is a finite normalizing extension of $R$ by which we mean that the $R$-module generators $\overline{g}$ satisfy $R\overline{g} = \overline{g}R$, since $\overline{g}a = (g^{-1}\cdot a) \overline{g}$, for all $a\in R$.
	
	\begin{prop}\label{prop:skewgroup}
		Let $G$ be a finite group acting on a ring $R$ such that $|G|$ is invertible in $R$. If $R$ is right dual Kasch, then $R*G$ is right dual Kasch.
	\end{prop}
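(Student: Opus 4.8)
The plan is to push injectivity from $R$ up to $R*G$ by induction of modules, and to control the simple $R*G$-modules by restricting them to $R$. I would set up the argument around two facts. First, $R*G=\bigoplus_{g\in G}R\overline{g}$ is a free $R$-module of rank $|G|$ on both sides with basis $\{\overline{g}\}_{g\in G}$, and $R*G$ is a Frobenius extension of $R$ (a Frobenius system being $E\colon R*G\to R$, $\sum_g a_g\overline{g}\mapsto a_e$, with dual bases $\{\overline{g}\}_{g\in G}$ and $\{\overline{g^{-1}}\}_{g\in G}$). Consequently the induction functor $-\otimes_R(R*G)\colon\Mod{R}\to\Mod{R*G}$ is naturally isomorphic to the coinduction functor $\Hom_R(R*G,-)$; since the latter is right adjoint to the exact restriction functor it preserves injectivity, hence so does induction. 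Second, for every right $R*G$-module $M$ the multiplication map $M_R\otimes_R(R*G)\to M$, $m\otimes x\mapsto mx$, is a surjective homomorphism of right $R*G$-modules, because $m\otimes 1\mapsto m$.

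The step I expect to be the real work is the lemma that for any simple right $R*G$-module $S$ the restriction $S_R$ is finitely generated and semisimple. Finite generation comes from choosing $0\neq s\in S$ and noting $S=s\,(R*G)=\sum_{g\in G}(s\overline{g})R$. For semisimplicity I would pick a maximal $R$-submodule $M\subsetneq S_R$ (it exists since $S_R$ is finitely generated and nonzero), observe that right multiplication by $\overline{g}$ is a bijection of $S$ carrying $R$-submodules to $R$-submodules (because $(n\overline{g})a=n\,(g\cdot a)\,\overline{g}$), so that each $M\overline{g}$ is again a maximal $R$-submodule of $S_R$, and then note that $N:=\bigcap_{g\in G}M\overline{g}$ satisfies $N\overline{h}=\bigcap_{g\in G}(M\overline{g})\overline{h}=\bigcap_{g\in G}M\overline{gh}=N$ for all $h\in G$; thus $N$ is an $R*G$-submodule contained in $M\subsetneq S$, so $N=0$ by simplicity. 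Hence $S$ embeds in the finite direct sum $\bigoplus_{g\in G}S/M\overline{g}$ of simple right $R$-modules, so $S_R$ is semisimple.

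With the lemma in hand the proof concludes quickly. Given that $R$ is right dual Kasch and $S$ is simple over $R*G$, write $S_R=\bigoplus_{i=1}^{k}T_i$ with the $T_i$ simple right $R$-modules; each $T_i$ is a homomorphic image of some injective right $R$-module $Q_i$, so $S_R$ is a homomorphic image of $Q=\bigoplus_{i=1}^{k}Q_i$, which is injective as a finite direct sum of injectives. Applying $-\otimes_R(R*G)$ (right exact) and then the multiplication map of the second fact produces an epimorphism $Q\otimes_R(R*G)\to S_R\otimes_R(R*G)\to S$ from an injective right $R*G$-module, so $S$ is a homomorphic image of an injective right $R*G$-module; by Theorem \ref{Kasch}(2) this is precisely the statement that $R*G$ is right dual Kasch. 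I would note that the argument uses that $G$ is finite but not that $|G|$ is invertible; the invertibility hypothesis is what is needed for the companion conclusion that the fixed ring $R^{G}$ is then also right dual Kasch, and it underlies the Maschke-type splitting of $M_R\otimes_R(R*G)\to M$, an alternative but here dispensable ingredient.
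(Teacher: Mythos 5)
Your proof is correct, but it takes a genuinely different route from the paper's. The paper goes ``up'' by coinduction: it applies $\Hom_R(R*G,-)$ to an epimorphism $E\to M$ (coinduction preserves injectivity because it is right adjoint to the exact restriction functor), and then must come back ``down'' by producing an $R*G$-linear surjection $\Hom_R(R*G,M)\to M$; this is exactly where the hypothesis $|G|^{-1}\in R$ is used, via a Maschke-type argument upgrading the $R$-linear splitting of $M\hookrightarrow\Hom_R(R*G,M)$ to an $R*G$-linear one. You instead go up by induction $-\otimes_R(R*G)$, whose counit (the multiplication map $S_R\otimes_R(R*G)\to S$) is automatically a surjection of right $R*G$-modules, so no splitting argument is needed; the price is that you must know induction preserves injectivity, which you obtain from the Frobenius-extension property of $R*G$ over $R$ (induction naturally isomorphic to coinduction -- your dual bases $\{\overline{g}\}$, $\{\overline{g^{-1}}\}$ with $E(\sum a_g\overline{g})=a_e$ do check out). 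You also prove directly, via the $G$-stable intersection $\bigcap_{g}M\overline{g}$, that the restriction of a simple $R*G$-module is finitely generated semisimple, where the paper simply cites McConnell--Robson 10.1.9. The most interesting difference is your closing remark: your argument uses only the finiteness of $G$, not the invertibility of $|G|$, whereas the paper explicitly states that it does not know whether this hypothesis is necessary and only suspects that it is not. If written out carefully, your approach therefore proves the stronger statement and settles that question affirmatively (the invertibility of $|G|$ remaining relevant only for the corollaries about $R^G$ and $RG$ that rely on Morita equivalence).
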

	\begin{proof}
		The proof relies on several (well-known) results in the theory of normalized ring extension. Set $S=R*G$. Let $M$ be a simple right $S$-module. Then by \cite{McConnellRobson}*{10.1.9}, $M$ is a semisimple Artinian right $R$-module, as $S$ is a finite normalizing extension of $R$. Since $R$ is right dual Kasch, there exists an injective right $R$-module $E$ and an epimorphism of right $R$-modules $E\rightarrow M$.
		Since $S$ is a free right $R$-module, the functor $\Hom_R(S,-)$ is exact and we obtain an epimorphism of Abelian groups
		$$\overline{f}=\Hom_R(f,-): \Hom_R(S,E) \rightarrow \Hom_R(S,M), \qquad \varphi \mapsto f\circ \varphi.$$
		For any right $R$-module $X$, the set of right $R$-linear maps $\Hom_R(S, X)$ is a right $R$-module by 
		$$ (\varphi\cdot s)(s') = \varphi(ss'), \qquad \forall s,s'\in S, \varphi \in \Hom_R(S,X).$$
		In particular, $\overline{f}=\Hom_R(f,-)$ is an epimorphism of right $S$-modules, as 
		$$\overline{f}(\varphi\cdot s)(s') = f(\varphi(ss'))  = (\overline{f}(\varphi)\cdot s)(s'), \qquad \forall s,s'\in S, \varphi \in \Hom_R(S,X).$$
		
		By \cite{Lam}*{3.6.13}, $\Hom_R(S, E)$, is an injective right $S$-module and we have obtained so far an epimorphism of the injective right $S$-module $\Hom_R(S, E)$ onto $\Hom_R(S, M)$.
		The simple right $S$-module $M$ embeds as right $S$-module into $\Hom_R(S,M)$ by 
		$$\alpha: M \rightarrow \Hom_R(S,M), \qquad m \mapsto \alpha(m):[s\mapsto ms].$$
		We easily check that $\alpha$ is injective and right $S$-linear, as 
		$$\alpha(ms)(s') = mss' = \alpha(m)(ss')=(\alpha(m)\cdot s)(s'), \qquad \forall s,s' \in S.$$
		The map $\beta:\Hom_R(S,M)\rightarrow M$ given by $\beta(f) = f(1)$ is right $R$-linear as 
		$$\beta(f\cdot r) = (f\cdot r)(1)=f(r)=f(1)r=\beta(f)r, \qquad \forall r\in R, f\in \Hom_R(S,M)$$
		and satisfies $\beta(\alpha(m))=\alpha(m)(1)=m$, for all $m\in M$. Thus $\alpha$ splits as $R$-module map.
		
		The hypothesis $|G|^{-1}\in R$ implies that any short exact sequence of right $S$-modules that splits as right $R$-modules, also splits as right $S$-modules (see for example \cite{wisbauer}*{38.4} or \cite{passman}*{Lemma 1.1}). Hence there exists a right $S$-linear map $\overline{\beta}: \Hom_R(S,M)\rightarrow M$ such that $\overline{\beta}\alpha = id_M$. In particular, $\overline{\beta}:\Hom_R(S,M)\rightarrow M$ is an epimorphism of right $S$-modules and so is $\overline{\beta}\overline{f}:\Hom_R(S,E)\rightarrow M$.
		
		This shows that every right $S$-module is an epimorphic image of an injective right $S$-module, i.e. $S=R*G$ is right dual Kasch.
		
	\end{proof}
	
	It is not clear whether the condition of $|G|$ being invertible in $R$ is necessary and we suspect that it is not. The construction of skew group rings allows us to find more (non-commutative) examples of right dual Kasch rings. Let $D$ be any right dual Kasch ring such that $2$ is invertible in $D$. The direct product $R=D\times D$ is also right dual Kasch. Consider the automorphism $\sigma$ of $R$ defined as $\sigma(a,b)=(b,a)$, for all $a,b\in D$ and consider the group $G$ generated by $\sigma$ in $\mathrm{Aut}(R)$. Then $|G|=2$ is invertible in $R$ by assumption and $R*G$ is right dual Kasch. As a concrete example one could take $D=\Q[x]/\langle x^2\rangle$. Then $R*G$ is actually isomorphic to the example mentioned later in Example \ref{exa:dualnotstrongly}.
	
	\medskip
	
	Recall that for a given group action of a group $G$ on a ring $R$, the fixed ring of $G$ on $R$ is the subring
	$$R^G = \{ a\in R: g\cdot a = a, \: \forall g\in G\}.$$
	It is known, that if $|G|^{-1} \in R$, then $R^G$ is Morita equivalent to $R*G$ (see for example \cite{McConnellRobson}*{7.8.7}). Using Proposition \ref{prop:skewgroup} and the Morita invariance of dual Kasch rings in  Corollary \ref{cor:Morita_dualKasch} we conclude the the following:
	
	\begin{cor} Let $G$ be a finite group acting as automorphisms on a ring $R$ such that $|G|^{-1}\in R$. If $R$ is right dual Kasch, then the fixed ring $R^G$ is right dual Kasch.
	\end{cor}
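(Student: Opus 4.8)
The plan is to deduce this immediately from the two tools assembled just above: the transfer of the dual Kasch property to skew group rings (Proposition \ref{prop:skewgroup}) and the Morita invariance of the dual Kasch property (Corollary \ref{cor:Morita_dualKasch}). First I would invoke Proposition \ref{prop:skewgroup}: since $G$ is finite, acts on $R$ by automorphisms, $|G|$ is invertible in $R$, and $R$ is right dual Kasch, the skew group ring $S=R*G$ is right dual Kasch.

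Next I would bring in the classical fact that, under the hypothesis $|G|^{-1}\in R$, the fixed ring $R^G$ and the skew group ring $R*G$ are Morita equivalent. Concretely, the element $e=|G|^{-1}\sum_{g\in G}\overline{g}$ is an idempotent of $S$ with $eSe\cong R^G$ and $SeS=S$, so $e$ is a full idempotent and $R^G\cong eSe$ is Morita equivalent to $S$; this is \cite{McConnellRobson}*{7.8.7}. (If one prefers, one can instead check directly that $Se$ is a progenerator in $\Mod{(R^G)}$, but there is no need to reprove a standard statement here.)

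Finally, Corollary \ref{cor:Morita_dualKasch} says that the right dual Kasch property is preserved under Morita equivalence. Applying it to the equivalence between $R*G$ and $R^G$, and using that $R*G$ is right dual Kasch from the first step, we conclude that $R^G$ is right dual Kasch.

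As for the main obstacle: there is essentially none of substance, since all the real work is already packaged in Proposition \ref{prop:skewgroup} and Corollary \ref{cor:Morita_dualKasch}. The only place where the hypothesis $|G|^{-1}\in R$ is used in an essential way beyond Proposition \ref{prop:skewgroup} is in the Morita equivalence $R^G\sim R*G$, which genuinely fails without invertibility of $|G|$; so the one thing worth double-checking is that the cited Morita equivalence is stated under exactly the hypotheses we have available, namely $G$ finite and $|G|^{-1}\in R$.
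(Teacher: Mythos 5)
Your proposal is correct and follows exactly the route the paper takes: Proposition \ref{prop:skewgroup} gives that $R*G$ is right dual Kasch, and then the Morita equivalence $R^G \sim R*G$ under $|G|^{-1}\in R$ (via \cite{McConnellRobson}*{7.8.7}) combined with Corollary \ref{cor:Morita_dualKasch} transfers the property to $R^G$. The only difference is that you spell out the full idempotent $e=|G|^{-1}\sum_{g\in G}\overline{g}$ realizing the equivalence, which the paper leaves to the citation.
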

	
	If the group action is trivial, i.e. $R=R^G$, then $R$ and $R*G=RG$ are Morita equivalent if $|G|^{-1}\in R$. Hence we have also
	\begin{cor} Let $G$ be a finite group, $R$ a ring such that $|G|^{-1}\in R$. Then $R$ is right dual Kasch if and only if $RG$ is right dual Kasch.
	\end{cor}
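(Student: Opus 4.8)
The plan is to deduce this from the Morita invariance of the dual Kasch property established in Corollary~\ref{cor:Morita_dualKasch}, exploiting the observation made in the paragraph immediately preceding the statement. Recall that the group ring $RG$ is precisely the skew group ring $R*G$ associated to the \emph{trivial} action of $G$ on $R$, and that for this action the fixed ring is $R^G = R$. Since $|G|^{-1}\in R$ by hypothesis, the cited Morita equivalence $R^G \sim R*G$ (from \cite{McConnellRobson}*{7.8.7}) specializes to a Morita equivalence between $R$ and $RG$.

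With this in hand, the proof is a one-line application of Corollary~\ref{cor:Morita_dualKasch}: the right dual Kasch condition is preserved in both directions under Morita equivalence, so $R$ is right dual Kasch if and only if $RG$ is right dual Kasch. If one prefers to be more self-contained for the ``only if'' implication, I would instead invoke Proposition~\ref{prop:skewgroup} directly with the trivial action: since $|G|$ is invertible in $R$ and $RG = R*G$, that proposition already yields that $RG$ is right dual Kasch whenever $R$ is, so that only the reverse implication genuinely relies on the Morita equivalence $R \sim RG$.

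I expect no real obstacle here — the substantive work has been front-loaded into the preceding corollaries and into the cited structure theory for fixed rings. The only points requiring a moment's care are the bookkeeping ones: confirming that the trivial action gives $R^G = R$ (so that the two rings being compared are exactly $R$ and $RG$, not some proper subring), and that the hypothesis $|G|^{-1}\in R$ is precisely what is needed to apply the fixed-ring Morita equivalence.
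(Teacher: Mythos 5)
Your proposal matches the paper's own argument exactly: the paper also observes that for the trivial action $R^G=R$ and $R*G=RG$, invokes the Morita equivalence $R^G\sim R*G$ when $|G|^{-1}\in R$, and concludes via the Morita invariance of the dual Kasch condition from Corollary~\ref{cor:Morita_dualKasch}. Your remark that Proposition~\ref{prop:skewgroup} gives the forward implication independently is a correct (and harmless) extra observation.
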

	
	
	How far is a dual Kasch ring from being self-injective? A right $R$-module $M$ is said to be $R$-projective if every homomorphism $M \to R/I$, where $I$ is a right ideal of $R$, factors through the canonical epimorphism $R\to R/I$. If one restricts this property to maximal right ideals $I$ of $R$, then one gets the weaker notion of max-projectivity (see, \cite{maxprojective}).

	\begin{prop}\label{prop:dualKasch_selfinjective} The following conditions are equivalent for a ring $R$:
		\begin{enumerate}
			\item[(a)] $R$ is right self-injective.
			\item[(b)] $R$ is right dual Kasch and $E(R)$ is projective.
		\end{enumerate} 
		Moreover, if $R$ is semilocal, then the following condition is also equivalent:
		\begin{enumerate}
			\item[(c)] $R$ is right dual Kasch and 
			$E(R)$ is max-projective.
		\end{enumerate}
	\end{prop}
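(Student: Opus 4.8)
The plan is to prove $(a)\Rightarrow(b)$ and $(b)\Rightarrow(a)$ directly, to observe that $(b)\Rightarrow(c)$ is immediate because projective modules are max-projective, and finally to note that the proof of $(b)\Rightarrow(a)$ carries over word for word to $(c)\Rightarrow(a)$. For $(a)\Rightarrow(b)$ there is nothing to do: if $R$ is right self-injective, then $R$ is an injective submodule of $E(R)$, hence a direct summand, hence all of $E(R)$ by essentiality; so $E(R)=R$ is free, in particular projective, and every simple right module $R/\m$ is a homomorphic image of the injective module $R=E(R)$, so $R$ is right dual Kasch.

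For the substantial direction, write $E=E(R)$ and run a trace-ideal argument. Fix a maximal right ideal $\m\leq R$. By Theorem~\ref{Kasch}(2) there is a non-zero, hence surjective, homomorphism $g_{\m}:E\to R/\m$. Since $E$ is projective, $g_{\m}$ factors through the canonical projection $\pi_{\m}:R\to R/\m$, say $g_{\m}=\pi_{\m}h_{\m}$ with $h_{\m}:E\to R$. From the surjectivity of $g_{\m}$ we get $h_{\m}(E)+\m=R$, and hence $h_{\m}(E)\not\subseteq\m$. As $\m$ ranged over all maximal right ideals, the trace ideal $T=\sum\{\,\mathrm{im}(h):h\in\Hom_R(E,R)\,\}$ is contained in no maximal right ideal of $R$, and therefore $T=R$.

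Now $1\in T$ produces finitely many $h_1,\dots,h_n\in\Hom_R(E,R)$ and $x_1,\dots,x_n\in E$ with $\sum_{i=1}^{n}h_i(x_i)=1$, so the map $\Phi:E^{\,n}\to R$, $(y_i)_i\mapsto\sum_i h_i(y_i)$, is surjective. Since $R_R$ is projective, $\Phi$ splits and exhibits $R$ as a direct summand of $E^{\,n}$. A finite direct sum of copies of the injective module $E$ is injective, so $R$ is injective; being essential in $E$, it therefore equals $E(R)$, i.e. $R$ is right self-injective. This settles $(b)\Rightarrow(a)$. The proof of $(c)\Rightarrow(a)$ is identical, the only point being that projectivity of $E$ was used solely to lift $g_{\m}$ to a homomorphism $E\to R$, and for this max-projectivity of $E$ already suffices, since $\m$ is maximal.

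The step that carries the weight is the passage from a family of homomorphisms $E\to R$ separating the maximal right ideals of $R$ to a \emph{finite} such family: this finiteness is exactly what makes the target $E^{\,n}$ injective, an infinite direct sum of copies of $E$ need not be injective, and it drops out of $1\in T$ for free. Everything else is formal, namely splitting $\Phi$ because $R$ is projective and concluding $R=E(R)$ because $R$ is injective and essential in its own hull; in particular Theorem~\ref{lem:subinjective} is not needed, and the semilocal hypothesis in $(c)$ does not seem to be needed for this particular route.
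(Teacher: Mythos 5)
Your proof is correct. For $(a)\Leftrightarrow(b)$ it is essentially the paper's argument made explicit: the paper simply asserts that a right dual Kasch ring with $E=E(R)$ projective makes $E$ a projective generator, so that some $E^k\twoheadrightarrow R$ splits; your trace-ideal computation ($T=\sum\{\mathrm{im}(h):h\in\Hom_R(E,R)\}$ meets no maximal right ideal, hence equals $R$, hence $1=\sum h_i(x_i)$ gives a split surjection $E^n\to R$) is precisely the proof of that assertion. Where you genuinely diverge is $(c)\Rightarrow(a)$. The paper uses the semilocal hypothesis in an essential-looking way: it writes $R/\mathrm{Jac}(R)$ as a finite direct sum of simples, assembles a single epimorphism $E^n\to R/\mathrm{Jac}(R)$, lifts it through the small epimorphism $R\to R/\mathrm{Jac}(R)$ by invoking a result on max-projectivity relative to finitely generated semisimple targets, and concludes the lift is onto by smallness of $\mathrm{Jac}(R)$. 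You instead lift one map $E\to R/\m$ at a time — which needs only the definition of max-projectivity, since each $\m$ is maximal — and let the trace ideal globalize; the finiteness needed to keep the target $E^n$ injective then falls out of $1\in T$ rather than out of $R/\mathrm{Jac}(R)$ being finitely generated, and surjectivity of $\Phi$ is immediate rather than inferred from a small epimorphism. The payoff is real: your argument proves $(c)\Rightarrow(a)$ for arbitrary rings, so the semilocal hypothesis in the proposition is superfluous and $(a)$, $(b)$, $(c)$ are equivalent in general — a strengthening of the stated result. The paper's route, by contrast, illustrates the intended use of max-projectivity against the whole top $R/\mathrm{Jac}(R)$ at once, which is why it pays the semilocal tax.
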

	
	\begin{proof} $(a)\Rightarrow (b) \Rightarrow (c)$ are clear.
		
		$(b)\Rightarrow (a)$ If $R$ is right dual Kasch, then $E=E(R)$ generates all simple right $R$-module. Hence $E$ is a projective generator and there exists a surjective homomorphism $f:E^k \to R$, which must split as $R$ is projective. Thus $R$ is isomorphic to a direct sum of $E$ and therefore an injective right $R$-module.
		
		$(c)\Rightarrow (a)$ Assume $R$ is semilocal and $(c)$ holds. Since $R$ is semilocal, $R/Jac(R) \simeq S_1^{n_1} \oplus \cdots \oplus S_k^{n_k}$, where the $S_i $'s are nonisomorphic simple right $R$-modules and $n_i$ is a positive integer for each $i=1,\cdots, k$. Let $E=E(R)$. As the ring is right dual Kasch, for each $i$, there is an epimorphism $E^{n_i} \to S_i^{n_i}$. Thus there is an epimorphism $f: E^n \to R/Jac(R)$ for some integer $n \geq n_1 + \cdots +n_k$. Since $E$ is max-projective, $E^n$ is max-projective as well. As $R/J(R)$ is finitely generated and semisimple, by \cite{maxprojective}*{Proposition 1} there is a homomorphism $g: E^n \to R$ such that $\pi g =f$, where $\pi: R\to R/Jac(R)$ is the canonical epimorphism. Since $f$ is an epimorphism, and $\pi$ is a small epimorphism, $g$ is an epimorphism. Therefore $g$ splits, and so $R$ is injective as desired, i.e. $(a)$ holds.
	\end{proof}
	
	
	A ring is quasi-Frobenius (QF) if it is one-sided Noetherian and one-sided self-injective ring. Any QF ring is indeed two-sided artinian and two-sided self-injective. Hence QF-rings are left and right dual Kasch rings.

	\begin{cor} The following statements are equivalent for a ring $R$.
		\begin{enumerate}
			\item[(a)] $R$ is a QF ring.
			\item[(b)] $R$ is one-sided Noetherian, right dual Kasch and $E(R)$ is projective.
			\item[(c)] $R$ is one-sided Noetherian, right dual Kasch, semilocal and $E(R)$ is max-projective.
		\end{enumerate}
		If moreover, $R$ is commutative then the following condition is also equivalent:
		\begin{enumerate}
			\item[(d)] $R$ is perfect and $E(R)$ is max-projective.
		\end{enumerate}
	\end{cor}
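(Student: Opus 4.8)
The plan is to read off all the equivalences from Proposition \ref{prop:dualKasch_selfinjective}, which already identifies right self-injectivity with the conjunction of the right dual Kasch property and projectivity of $E(R)$ (and, under a semilocal hypothesis, with the right dual Kasch property and max-projectivity of $E(R)$), together with the definition of a $QF$ ring as a one-sided Noetherian, one-sided self-injective ring and the standard fact that such a ring is automatically two-sided Artinian and two-sided self-injective.

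For (a)$\Leftrightarrow$(b): if $R$ is $QF$ it is one-sided Noetherian, it is two-sided self-injective (hence right dual Kasch, as noted just before the statement) and $E(R)=R$ is projective, so (a)$\Rightarrow$(b); conversely, by Proposition \ref{prop:dualKasch_selfinjective} condition (b) forces $R$ to be right self-injective, and being also one-sided Noetherian it is $QF$. For (a)$\Leftrightarrow$(c): if $R$ is $QF$ it is two-sided Artinian, in particular semilocal and one-sided Noetherian, it is right dual Kasch, and $E(R)=R$ is projective, hence max-projective; conversely, since $R$ is semilocal in (c), the semilocal part of Proposition \ref{prop:dualKasch_selfinjective} shows that $R$ is right self-injective, and one-sided Noetherianity then gives that $R$ is $QF$. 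This settles (a)$\Leftrightarrow$(b)$\Leftrightarrow$(c).

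Now assume $R$ is commutative. If $R$ is $QF$ it is Artinian, hence perfect, and $E(R)=R$ is max-projective, giving (a)$\Rightarrow$(d). For (d)$\Rightarrow$(a): a commutative perfect ring is semiperfect, in particular semilocal, and by Proposition \ref{prop:Hmax-strdualKasch} it is (strongly) dual Kasch; since in addition $E(R)$ is max-projective, the semilocal part of Proposition \ref{prop:dualKasch_selfinjective} yields that $R$ is self-injective. It remains to show that a commutative perfect self-injective ring $R$ is $QF$, and this is the step I expect to be the crux. Here is how I would proceed: being perfect, $R$ is semiperfect, so it decomposes as a finite product of local rings, each of which is again perfect and (being a ring direct factor of a self-injective ring) self-injective; hence we may assume $R$ is local with maximal ideal $\mathfrak m=Jac(R)$, which is $T$-nilpotent. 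A local commutative self-injective ring is indecomposable as a module over itself and is the injective hull of each of its nonzero submodules; in particular $\soc(R)$ is simple and essential and $R\cong E(R/\mathfrak m)$. Using the self-duality furnished by $\Hom_R(-,R)$ together with $T$-nilpotency of $\mathfrak m$, one argues by a double-annihilator (Matlis-type) computation — the technical heart of the matter — that the socle layers $\soc_{i+1}(R)/\soc_i(R)\cong \Hom_R(\mathfrak m^i/\mathfrak m^{i+1},R)$ are finite dimensional and that $\mathfrak m$ is nilpotent, so that the socle series of $R$ terminates after finitely many steps; thus $R$ has finite length, and an Artinian commutative self-injective ring is $QF$. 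Alternatively one may simply invoke the classical fact that a commutative perfect self-injective ring is quasi-Frobenius. In either case $R$ is $QF$, which completes (d)$\Rightarrow$(a).
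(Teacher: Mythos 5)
Your argument is correct and follows essentially the same route as the paper: everything is reduced to Proposition \ref{prop:dualKasch_selfinjective} (using that a commutative perfect ring is semilocal, and Proposition \ref{prop:Hmax-strdualKasch} for the dual Kasch property in (d)), together with the classical facts that a one-sided Noetherian, one-sided self-injective ring is QF and that a perfect two-sided self-injective ring is QF. For the last fact the paper simply cites \cite{qf}*{Theorem 6.39}, which is the ``alternative'' you mention, so your Matlis-type sketch of that step, while plausible, is not needed.
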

	\begin{proof}
		$(a) \Rightarrow (c), (d)$ is clear and $(a) \Leftrightarrow (b)$ follows from Proposition \ref{prop:dualKasch_selfinjective}.

		$(c)\Rightarrow (a)$  By Proposition \ref{prop:dualKasch_selfinjective}, $R$ is right self-injective and it is well known that,  every one sided noetherian and one sided self-injective ring is $QF$. 
		
		$(d) \Rightarrow (a)$ Suppose $R$ is a commutative perfect ring such that $E(R)$ is max-projective, then it is dual Kasch by Proposition \ref{prop:Hmax-strdualKasch}  and self-injective by Proposition \ref{prop:dualKasch_selfinjective}. By 
		\cite{qf}*{Theorem 6.39} any   left perfect two sided self-injective ring is a QF-ring. 
	\end{proof}
	

	Not every QF ring is strongly dual Kasch as we will see. For that, we recall the notion of the Nakayama permutation of a QF ring (see, \cite{Lam}*{page 425}). Let $R$ be a QF ring and $\bar{R}=R/Jac(R)$. Then we have 
	$$1=e_{11} + \cdots +e_{1n_1} + \cdots + e_{s1} + \cdots + e_{sn},$$be a decomposition of 1 into sum of orthogonal primitive idempotents, where $e_i :=e_{i1}$ for $1 \leq i \leq s$ are mutually nonisomorphic i.e. $e_iR \ncong e_j R$ for $i \neq j$, but $e_i$ is isomorphic to each $e_{il}$.  Let $$U_i =e_iR,\,\,\,S_i= \bar{e_i}\bar{R}.$$Then, $\{ U_i \}$ is a complete set of right principal indecomposables, and $\{S_i \}$ is a complete set of simple right $R$-modules. Then  $R_R=  U_1^{n_1} \oplus \cdots  \oplus U_s^{n_s }$ and $\bar{R}_{\bar{R}}=S_1^{n_1 } \oplus \cdots  \oplus  S_s^{n_s}.$ Note that, $S_i \cong U_i/Rad (U_i)$. Since QF rings are right Kasch, the map $\pi :\{1,\cdots ,s\} \to \{1,\cdots ,s \},$ defined by $$\soc(U_i) \cong S_{\pi (i)},\,\,\, 1 \leq i \leq s$$
	is a permutation of the set $\{1,\cdots, s\}$. This map $\pi$ is called the Nakayama permutation of the QF ring $R$. 
	
	\medskip

	Note that each $U_i$ is injective, and is the injective hull of $\soc(U_i)$.  Furthermore, $\pi$ is the identity permutation on the set $\{1,2,\cdots ,s\}$ if and only if $\soc(U_i)\cong S_{i}$ if and only is $Hom (U_i, \, S_i)\neq 0$. Hence, we have the following result.
	
	\begin{prop}\label{prop:QF_Nakayama} A $QF$ ring is strongly dual Kasch if and only if its Nakayama permutation is the identity permutation.
	\end{prop}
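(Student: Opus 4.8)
The plan is to reduce the statement to the chain of equivalences recorded immediately before the proposition, namely that the Nakayama permutation $\pi$ is the identity if and only if $\Hom_R(U_i, S_i) \neq 0$ for every $i$, together with the fact that each $U_i$ is the injective hull of its own socle. The only additional ingredient is a dictionary identifying the injective hull $E(S_i)$ of each simple module with the appropriate principal indecomposable $U_j$.

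First I would establish that dictionary. Since $\soc(U_j) \cong S_{\pi(j)}$ and $U_j = E(\soc(U_j))$, we have $U_j \cong E(S_{\pi(j)})$ for each $j$; because $\pi$ is a permutation this is equivalent to $E(S_i) \cong U_{\pi^{-1}(i)}$ for each $i$. Every simple right $R$-module is isomorphic to one of $S_1, \dots, S_s$, and the property of being a homomorphic image of one's injective hull is an isomorphism invariant, so $R$ is right strongly dual Kasch precisely when $\Hom_R(E(S_i), S_i) \neq 0$ for all $i$, equivalently $\Hom_R(U_{\pi^{-1}(i)}, S_i) \neq 0$ for all $i$, equivalently (re-indexing by $k = \pi^{-1}(i)$) $\Hom_R(U_k, S_{\pi(k)}) \neq 0$ for all $k$.

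Next I would use that each $U_k$ is a local module with unique maximal submodule $\mathrm{Rad}(U_k)$ and top $U_k/\mathrm{Rad}(U_k) \cong S_k$. Hence any nonzero homomorphism from $U_k$ to a simple module is surjective with kernel $\mathrm{Rad}(U_k)$, so $\Hom_R(U_k, S_{\pi(k)}) \neq 0$ forces $S_{\pi(k)} \cong S_k$; since the $S_i$ are pairwise non-isomorphic this gives $\pi(k) = k$. Conversely, if $\pi(k) = k$ for all $k$, then $E(S_k) \cong U_k$ and the canonical projection $U_k \twoheadrightarrow U_k/\mathrm{Rad}(U_k) \cong S_k$ exhibits each $S_k$ as a homomorphic image of its injective hull. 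Putting the two directions together shows that $R$ is strongly dual Kasch if and only if $\pi(k) = k$ for every $k$, i.e. if and only if $\pi = \id$.

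I do not anticipate a real obstacle here: the structure theory of QF rings assembled in the paragraphs preceding the proposition does essentially all the work. The two places that need care are the index bookkeeping (distinguishing $\pi$ from $\pi^{-1}$ when matching $E(S_i)$ to the correct $U_j$) and the remark that it is enough to test the defining condition of strongly dual Kasch on the finite list $S_1, \dots, S_s$ of isomorphism classes of simple modules rather than on all simple right $R$-modules.
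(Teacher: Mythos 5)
Your argument is correct and is essentially the paper's own: the paper proves the proposition via the remark immediately preceding it, namely that each $U_i$ is the injective hull of $\soc(U_i)\cong S_{\pi(i)}$ and that a nonzero map from the local module $U_i$ to a simple module must hit its top $S_i$. Your write-up just makes the $\pi$ versus $\pi^{-1}$ bookkeeping and the reduction to the finite list $S_1,\dots,S_s$ explicit, which the paper leaves implicit.
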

	
	
	
	
	A QF algebra is called \emph{weakly symmetric} if its Nakayama permutation $\pi$ is the identity (see \cite{Lam}*{page 444}). Hence the following is clear.

	\begin{prop} A $QF$ algebra is strongly dual Kasch if and only if it is weakly symmetric.
	\end{prop}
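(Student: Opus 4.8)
The plan is to obtain the statement as an immediate consequence of Proposition \ref{prop:QF_Nakayama} combined with the definition of weak symmetry. First I would recall that, by definition, a QF algebra is weakly symmetric exactly when its Nakayama permutation $\pi$ is the identity permutation on the index set $\{1,\dots,s\}$ parametrising the isomorphism classes of principal indecomposable modules. Next I would observe that a finite dimensional QF algebra over a field $k$ is in particular a QF ring, so Proposition \ref{prop:QF_Nakayama} applies without modification and tells us that such a ring is right strongly dual Kasch if and only if $\pi = \id$.

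Chaining these two equivalences gives: the QF algebra is strongly dual Kasch $\iff$ $\pi = \id$ $\iff$ it is weakly symmetric, which is precisely the claim. So the argument is a single line once the two ingredients are in place, and I would present it as such.

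The only point needing a moment's care — the closest thing to an obstacle — is the matching of sides: "strongly dual Kasch", "weakly symmetric" and "Nakayama permutation" all have a left and a right version, and one should be sure they are taken on the same side. This is not actually an issue, however, because a QF ring is two-sided Artinian and two-sided self-injective, and the standard $k$-linear duality for finite dimensional algebras interchanges the left and right versions of each of these notions consistently; hence the equivalence holds on either side. I would therefore conclude simply by invoking Proposition \ref{prop:QF_Nakayama} and unwinding the definition of weakly symmetric, with no further computation required.
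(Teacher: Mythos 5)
Your proposal is correct and matches the paper exactly: the paper states the result immediately after defining weak symmetry as the Nakayama permutation being the identity, treating it as a direct consequence of Proposition \ref{prop:QF_Nakayama} with no further argument. Your extra remark on matching sides is harmless additional care but not needed for the paper's (one-line) justification.
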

	
	Not all $QF$-algebras over a field are weakly symmetric. The following example is  \cite{Lam}*{Examples 16.9(4)} with a different presentation:
	
	\begin{example}[A dual Kasch ring that is not strongly dual Kasch]\label{exa:dualnotstrongly}
		Let $F$ be a field and $S=F\times F$. Set $e_1=(1,0)$ and $e_2=(0,1)$ and define the automorphism
		$\sigma:S\rightarrow S$ given by $\sigma(e_1)=e_2$ and $\sigma(e_2)=e_1$. Then form the skew polynomial ring $S[x;\sigma]$, i.e. the set of all polynomials $\sum s_ix^i$ with $s_i\in S$ subject to 
		$ xs = \sigma(s)x$. In particular, $x e_1 = e_2 x$ and $xe_2 = e_1x$ hold. Factoring out the ideal generated by the central element $x^2$ yields a $4$-dimensional $F$-algebra
		$$ R = S[x; \sigma]/\langle x^2\rangle = \{ s_0 + s_1x \mid s_0,s_1 \in S\}$$
		where we write $x$ for $x+\langle x^2\rangle$ and where the multiplication is 
		$$ (s_0 + s_1x)(t_0 + t_1x) = s_0t_0 + \left(s_0t_1 + t_0\sigma(s_1)\right)x, \qquad \forall s_i, t_i \in S.$$
		Then $ R = e_1 R \oplus e_2 R$, $J=\mathrm{Jac}(R)=\soc(R_R) = Fe_1x \oplus Fe_2x$.
		Hence $e_iJ = \soc(e_1R)=Fe_ix$, for $i=1,2$ and  $e_1J \not\simeq e_2J$.
		On the other hand note that
		$e_1R/e_1J \simeq e_2J$  and $e_2R/e_2J\simeq e_1J$. 
		By  \cite{Lam}*{Theorem 16.4}, $R$ is QF with Nakayama permutation $\pi=(12)$. In particular, $R$ is not weakly symmetric and hence not strongly dual Kasch. As a two-sided self-injective ring, $R$ is two-sided dual Kasch.
	\end{example}

	



	\section{Commutative Dual Kasch Rings}
	In this section, we deal with (strongly) dual Kasch rings over commutative rings. We characterize commutative dual Kasch rings with finite Goldie dimension.
	We start with the following Lemma which implies that commutative Kasch rings are  (strongly) dual Kasch.

	\begin{lem}\label{lem:simplequotient} Let $V$ be a  minimal ideal in a commutative ring $R$. Then $V$ is a homomorphic image of $E(V)$.
	\end{lem}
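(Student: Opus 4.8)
The plan is to construct, directly, a nonzero $R$-homomorphism $\gamma\colon E(V)\to V$; since a minimal ideal $V$ is a simple module, any nonzero homomorphism into $V$ is automatically surjective, which is exactly the assertion.

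First, fix notation. Choose $0\neq v_{0}\in V$. As $V$ is a minimal ideal, $V=Rv_{0}$, and $\m:=\mathrm{ann}_{R}(v_{0})$ is a maximal ideal of $R$ with $R/\m\cong V$; write $k=R/\m$. Note $\m V=\m Rv_{0}=R(\m v_{0})=0$, so $V$ is a $k$-vector space. The heart of the argument is identifying the $\m$-torsion submodule of $E(V)$: the claim is that
\[
\mathrm{ann}_{E(V)}(\m):=\{x\in E(V):\m x=0\}=V .
\]
Indeed $V\subseteq\mathrm{ann}_{E(V)}(\m)$ since $\m V=0$, and $\mathrm{ann}_{E(V)}(\m)$ is an $R$-submodule of $E(V)$ annihilated by $\m$, hence a $k$-vector space, whose $R$-submodules are precisely its $k$-subspaces. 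As $V$ is essential in $E(V)$ it is essential in $\mathrm{ann}_{E(V)}(\m)$; but an essential subspace of a vector space is the whole space, so the claimed equality follows.

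Now define $\gamma\colon E(V)\to E(V)$ by $\gamma(x)=v_{0}x$, the action of the ring element $v_{0}\in R$ on $x\in E(V)$ — this step uses crucially that $V$ is an \emph{ideal}, so that $v_{0}$ is an element of $R$ by which we may multiply. For every $x\in E(V)$ we have $\m\cdot\gamma(x)=(\m v_{0})x=0$, so $\gamma(x)\in\mathrm{ann}_{E(V)}(\m)=V$ by the previous step; thus $\gamma$ is in fact a homomorphism $E(V)\to V$. It remains to see $\gamma\neq0$, and here the injectivity of $E(V)$ enters: since $V$ is an ideal, the inclusion $V\hookrightarrow E(V)$ extends along the inclusion $V\hookrightarrow R$ to a homomorphism $\varphi\colon R\to E(V)$ with $\varphi(v)=v$ for all $v\in V$. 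Then
\[
v_{0}=\varphi(v_{0})=\varphi(v_{0}\cdot 1)=v_{0}\,\varphi(1)=\gamma\bigl(\varphi(1)\bigr),
\]
so $\gamma\bigl(\varphi(1)\bigr)=v_{0}\neq0$. Hence $\gamma\colon E(V)\to V$ is a nonzero map into a simple module, so it is an epimorphism, as required.

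The step I expect to carry the real content is the equality $\mathrm{ann}_{E(V)}(\m)=V$, together with the observation that "multiplication by $v_{0}$" is available only because $V$ sits inside $R$ as an ideal; this is precisely where the hypothesis that $V$ is a minimal ideal — and not merely an abstract simple module — is used (abstract simple modules need not be homomorphic images of their injective hulls, as Example~\ref{exa:dualnotstrongly} shows). Everything else is routine; in particular no case distinction according to whether $V^{2}=0$ or $V^{2}=V$ is needed.
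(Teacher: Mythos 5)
Your proof is correct, and it reaches the conclusion by a more explicit route than the paper's. The paper sets $\mathfrak{m}=\mathrm{ann}_R(V)$, uses the decomposition $E(R)=E(V)\oplus K$ to show by contradiction that $\mathfrak{m}E(V)\neq E(V)$, and then invokes the abstract fact that the nonzero $R/\mathfrak{m}$-vector space $E(V)/\mathfrak{m}E(V)$ surjects onto $R/\mathfrak{m}\cong V$. You instead exhibit the epimorphism concretely as multiplication by a fixed $0\neq v_0\in V$, which requires two observations the paper does not make: that $\mathrm{ann}_{E(V)}(\mathfrak{m})=V$ (your essentiality argument here is sound, since an essential subspace of a vector space must be the whole space), so that $v_0E(V)\subseteq V$; and that $v_0E(V)\neq 0$. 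For the latter you extend the inclusion $V\hookrightarrow E(V)$ along $V\hookrightarrow R$ to $\varphi:R\to E(V)$ and compute $v_0\varphi(1)=\varphi(v_0)=v_0\neq 0$; this is the same underlying mechanism as the paper's computation $x=x\cdot 1=x\cdot v+x\cdot k$ with $1=v+k\in E(V)\oplus K$, since your $\varphi$ can be taken to be the composite $R\hookrightarrow E(R)\twoheadrightarrow E(V)$, in which case $\varphi(1)=v$. What your version buys is an explicit description of the surjection $E(V)\to V$ (it is multiplication by any nonzero element of $V$) together with the clean intermediate identity $\mathrm{ann}_{E(V)}(\mathfrak{m})=V$; the paper's version is shorter because it never needs to identify the image of any particular map, only the nonvanishing of $\Hom(E(V),R/\mathfrak{m})$.
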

	
	\begin{proof} Let $V$ be a minimal ideal of $R$ with annihilator $\mathfrak{m}=\{r\in R: Vr=0\}$.  Let $E=E(V)$ be the injective hull of $V$. Then $E(R)=E \oplus K$ for some submodule $K$ of $E(R)$. Suppose $E=\mathfrak{m}E$, then $V E=V \mathfrak{m}  E=0$. Since $1 \in E(R)$, there are $v \in E(V)$ and $k \in K$ such that $1=v+k$. Let $0 \neq x \in V$. Then $x=x\cdot 1 =x\cdot v + x\cdot k=x\cdot k$. Thus $x \in E(V) \cap K=0$, a contradiction. Therefore, we must have $E \neq \mathfrak{m} E$. Thus  $E/\mathfrak{m} E$ is a semisimple $R/\mathfrak{m}$-module and hence there exists a non-zero homomorphism from $E(V)$  to $R/\mathfrak{m}\simeq V$. 
	\end{proof}
	
	\begin{cor} Every commutative  Kasch ring is strongly dual Kasch.
	\end{cor}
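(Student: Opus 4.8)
The plan is to obtain this as an immediate consequence of Lemma \ref{lem:simplequotient} together with the very definition of a Kasch ring. Let $R$ be a commutative Kasch ring and let $S$ be a simple $R$-module. First I would use the Kasch hypothesis directly: every simple $R$-module embeds into $R$, so there is a monomorphism $S\hookrightarrow R$ whose image is a minimal ideal $V$ of $R$ with $V\cong S$.

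Next I would apply Lemma \ref{lem:simplequotient} to the minimal ideal $V$, which yields an epimorphism $E(V)\twoheadrightarrow V$. Since injective hulls are unique up to isomorphism and $V\cong S$, we have $E(V)\cong E(S)$; composing the isomorphisms with this epimorphism gives an epimorphism $E(S)\twoheadrightarrow S$. As $R$ is commutative there is no distinction between the left and right versions of the condition, so this shows that every simple $R$-module is a homomorphic image of its own injective hull, i.e. $R$ is strongly dual Kasch.

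I do not expect any real obstacle here, since all the content is already contained in Lemma \ref{lem:simplequotient}. The only point requiring a word of care is that the \emph{strongly} dual Kasch condition demands the surjection to originate from the injective hull of $S$ itself rather than from an arbitrary injective module; but this is automatic, because Lemma \ref{lem:simplequotient} delivers a quotient of $E(V)$ and $E(V)$ is precisely an injective hull of the simple module $S\cong V$.
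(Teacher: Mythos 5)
Your proof is correct and follows exactly the route the paper intends: the paper states this corollary immediately after Lemma \ref{lem:simplequotient} with no written proof, precisely because the argument is the one you give (a simple module over a commutative Kasch ring is isomorphic to a minimal ideal $V$, and the lemma provides the epimorphism $E(V)\to V$). No gaps.
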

	
	
	
	
	\medskip
	
	Kasch rings are related to classical rings, i.e. rings such that any element is either a zero divisor or invertible. Recall that a \emph{reversible} ring is a ring such that for all non-zero $a,b\in R$, if  $ab=0$, then there exists $0\neq c\in R$ such that $bc=0$.  
	
	\begin{cor}\label{cor:reversibleKasch}
		Any reversible right Kasch ring  is classical.
	\end{cor}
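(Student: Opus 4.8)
The plan is to prove the equivalent statement that every non-zero divisor of $R$ is invertible; here I read ``$a$ is a zero divisor'' as $\mathrm{r.ann}(a)\neq 0$, so I fix $a\in R$ with $\mathrm{r.ann}(a)=0$ — in particular $a\neq 0$, since in a non-trivial ring $0$ is a zero divisor — and try to build a two-sided inverse for $a$.

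The first step is to show $aR=R$. Suppose otherwise; then $aR$ is a proper right ideal, so it is contained in some maximal right ideal $M$. At this point I would invoke the right Kasch hypothesis in its element form: an embedding $\iota:R/M\hookrightarrow R$ sends $1+M$ to some $x$, and from $\iota(r+M)=xr$ together with injectivity of $\iota$ one reads off that $x\neq 0$ and $M=\mathrm{r.ann}(x)$. Since $a=a\cdot 1\in aR\subseteq M$, this gives $xa=0$ with $x\neq 0$ and $a\neq 0$. Now I apply reversibility to the relation $xa=0$: it produces $0\neq c\in R$ with $ac=0$, i.e. $\mathrm{r.ann}(a)\neq 0$, contradicting the choice of $a$. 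Hence $aR=R$.

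The second step is routine: choose $b$ with $ab=1$; then $a(ba-1)=aba-a=a-a=0$, so $ba-1\in\mathrm{r.ann}(a)=0$, whence $ba=1$ and $a$ is a unit. Since $a$ was an arbitrary non-zero divisor, this shows every element of $R$ is a zero divisor or a unit, i.e. $R$ is classical.

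I do not expect a serious obstacle here; the argument is essentially two lines once the ingredients are in place. The two points that need care are: (i) extracting from right Kasch the precise fact that each maximal right ideal $M$ equals the right annihilator of a \emph{single} nonzero element $x$, so that $a\in M$ delivers the equation $xa=0$; and (ii) using the reversibility axiom in the right direction, namely to convert the annihilation $xa=0$ on the left of $a$ into a witness $ac=0$ on the right of $a$, which is exactly the side relevant to ``$a$ is a zero divisor.''
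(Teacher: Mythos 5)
Your proof is correct and follows essentially the same route as the paper's: the right Kasch hypothesis produces a nonzero $x$ with $xa=0$ whenever $aR\neq R$, and reversibility converts this into $ac=0$ with $c\neq 0$, so every non-unit is a zero divisor. The only cosmetic differences are that the paper invokes Lam's characterization (every proper right ideal has a nonzero left annihilator) rather than passing through a maximal right ideal containing $aR$, and it deduces two-sided invertibility from the Dedekind-finiteness of reversible rings instead of your direct computation $ba-1\in\mathrm{r.ann}(a)=0$.
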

	
	\begin{proof}
		By \cite{Lam}*{8.28} we have that $R$ is a right Kasch ring if  and only if every  proper right ideal has a non-zero left annihilator. Hence if $0\neq a\in R$ and $aR\neq R$, then there exists $0\neq x\in R$ with $xa=0$. By assumption there exists $y\in R$ such that $ay=0$. On the other hand, if $xR=R$, then $x$ would be right invertible and since reversible rings are Dedekind-finite, $x$ is invertible. Hence any element of $R$ is either a zero divisor or invertible.
	\end{proof}
	
	Note that any left or right Artinian ring is reversible and classical, but need not to be Kasch.  For example the ring of upper triangular $2x2$ matrix ring over a field is finite dimensional algebra, which is not a Kasch ring. However for commutative rings with some finiteness conditions, the notions of a Kasch ring, a dual Kasch ring and a  classical ring coincide as we will see, using   \cite{Filipowicz}*{Theorem 3.4}, which says that any finitely generated ideal of a commutative ring with finite Goldie dimension has a non-zero annihilator.
	
	\begin{thm}\label{thm:dualKasch_comm} The following are equivalent for a commutative ring R with finite Goldie dimension.
		\begin{enumerate}
			\item[(a)]  $R$ is a dual Kasch ring.
			\item[(b)]  $R$ is a classical ring.
			\item[(c)]  $\mathfrak{m}E \neq E$ for each maximal ideal $\mathfrak{m}$ of $R$, where $E=E(R)$.
		\end{enumerate}
		If moreover $R$ is Noetherian, then the following condition is also equivalent:
		\begin{enumerate}
			\item[(d)] $R$ is a Kasch ring.
		\end{enumerate}
	\end{thm}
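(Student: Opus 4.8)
The plan is to prove $(a)\Rightarrow(c)\Rightarrow(b)\Rightarrow(c)\Rightarrow(a)$, observing along the way that the equivalence $(a)\Leftrightarrow(c)$ and the implication $(c)\Rightarrow(b)$ actually hold over an arbitrary commutative ring, so that the finite Goldie dimension hypothesis is only used for $(b)\Rightarrow(c)$. The statement about the Noetherian case will then drop out of Corollary \ref{cor:reversibleKasch} and the characterization \cite{Lam}*{8.28} of Kasch rings. Throughout I would write $E=E(R)$.

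For $(a)\Rightarrow(c)$ I would use Theorem \ref{Kasch}(2): if $R$ is dual Kasch then for each maximal ideal $\mathfrak{m}$ the simple module $R/\mathfrak{m}$ is a homomorphic image of $E$, say via an epimorphism $f\colon E\to R/\mathfrak{m}$. Then $f(\mathfrak{m}E)=\mathfrak{m}f(E)=\mathfrak{m}(R/\mathfrak{m})=0$ while $f(E)=R/\mathfrak{m}\neq 0$, forcing $\mathfrak{m}E\neq E$. Conversely, for $(c)\Rightarrow(a)$, if $\mathfrak{m}E\neq E$ then $E/\mathfrak{m}E$ is a nonzero vector space over the field $R/\mathfrak{m}$ and hence has $R/\mathfrak{m}$ as a quotient; the composite $E\twoheadrightarrow E/\mathfrak{m}E\twoheadrightarrow R/\mathfrak{m}$ realizes every simple module as a homomorphic image of the injective module $E$, so $R$ is dual Kasch by Theorem \ref{Kasch}(2).

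The implication $(c)\Rightarrow(b)$ rests on the following observation: for a non-zero-divisor $a\in R$, multiplication by $a$ is a bijection on $E$. Injectivity is immediate from the essentiality of $R$ in $E$ (if $ax=0$ with $0\neq x\in E$, choose $r$ with $0\neq xr\in R$; then $a(xr)=0$ in $R$, impossible). Surjectivity follows by applying the exact functor $\Hom_R(-,E)$ to the short exact sequence $0\to R\xrightarrow{a}R\to R/aR\to 0$ and identifying $\Hom_R(R,E)$ with $E$, under which the induced endomorphism becomes multiplication by $a$; exactness then forces this endomorphism to be surjective. Hence $aE=E$. Now if some non-zero-divisor $a$ failed to be a unit, then $aR$ would lie in a maximal ideal $\mathfrak{m}$ and we would get $E=aE\subseteq\mathfrak{m}E\subseteq E$, contradicting $(c)$. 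So every non-zero-divisor of $R$ is invertible, i.e.\ $R$ is classical.

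It remains to prove $(b)\Rightarrow(c)$, and this is where finite Goldie dimension enters. Assuming $R$ classical and $\mathfrak{m}E=E$ for some maximal ideal $\mathfrak{m}$, write $1=\sum_{i=1}^n m_i x_i$ with $m_i\in\mathfrak{m}$ and $x_i\in E$, and set $I=(m_1,\dots,m_n)\subseteq\mathfrak{m}$, so that $IE=E$. Since $\mathfrak{m}$ is proper and $R$ is classical, every element of $\mathfrak{m}$, in particular every element of the finitely generated ideal $I$, is a zero divisor; hence by \cite{Filipowicz}*{Theorem 3.4} there is $0\neq c\in R$ with $cI=0$, whence $cE=c(IE)=0$ and, since $1\in R\subseteq E$, $c=0$, a contradiction. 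Finally, in the Noetherian case, $(d)\Rightarrow(b)$ is exactly Corollary \ref{cor:reversibleKasch} (commutative rings being reversible), while for $(b)\Rightarrow(d)$ every proper ideal of $R$ is finitely generated and, $R$ being classical, consists of zero divisors, so it has a nonzero annihilator by \cite{Filipowicz}*{Theorem 3.4}, and \cite{Lam}*{8.28} then gives that $R$ is Kasch. I expect the surjectivity half of the lemma used in $(c)\Rightarrow(b)$, together with the correct invocation of \cite{Filipowicz}*{Theorem 3.4} in $(b)\Rightarrow(c)$, to be the only non-routine points; everything else is bookkeeping.
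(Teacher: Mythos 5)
Your proof is correct, and all the key ingredients coincide with the paper's: the fact that a regular element $a$ satisfies $aE=E$ for injective $E$, the observation that $E/\mathfrak{m}E$ is a nonzero $R/\mathfrak{m}$-vector space when $\mathfrak{m}E\neq E$, and the appeal to \cite{Filipowicz}*{Theorem 3.4} for a finitely generated ideal of zero divisors in $(b)\Rightarrow(c)$ and for $(b)\Rightarrow(d)$. The differences are organizational but worth noting. The paper closes the cycle as $(a)\Rightarrow(b)\Rightarrow(c)\Rightarrow(a)$, so the Goldie hypothesis is threaded through the whole chain; you instead prove $(a)\Leftrightarrow(c)$ directly (your $(a)\Rightarrow(c)$ argument, pushing $\mathfrak{m}E$ through an epimorphism $E\to R/\mathfrak{m}$, is shorter than the paper's detour through classicality) and $(b)\Leftrightarrow(c)$, which cleanly isolates finite Goldie dimension as being needed only for $(b)\Rightarrow(c)$ --- a genuine, if modest, refinement, since it shows $(a)\Leftrightarrow(c)$ and $(c)\Rightarrow(b)$ hold for every commutative ring. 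You also supply the $\Hom_R(-,E)$ argument for why multiplication by a regular element is surjective on $E$, which the paper merely asserts. As you note yourself, only the surjectivity half of that lemma is used, so the injectivity claim is harmless padding. One small caution: state \cite{Filipowicz}*{Theorem 3.4} as applying to finitely generated ideals \emph{consisting of zero divisors} (as you in fact do when you invoke it); the paper's own paraphrase omits that qualifier and would be false as literally written.
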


	\begin{proof} 
		$(a) \Rightarrow (b)$ Let $x$ be a non zero divisor of $R$. Then $x$ is a regular element of $R$.  Suppose $x$ is non invertible, and let $\mathfrak{m}$ be a maximal ideal of $R$ containing $x$.  Since $x$ is  regular $xE=E$ for every injective $R$-module $E$. This implies that $\mathfrak{m}E=E$ for every injective module $E$. Then $\Hom (E, R/\mathfrak{m})=0$, and this contradicts (1). Hence $R$ is a classical ring.
		
		\medskip
		
		$(b) \Rightarrow (c)$ Suppose the contrary that  $\mathfrak{m}E = E$ for some maximal ideal $\mathfrak{m}$ of $R$. Then as $R \leq E$, there are $a_1,\,a_2,\cdots a_n \in \mathfrak{m}$ and $e_1,\,e_2,\cdots e_n \in E$ such that $$1=a_1e_1 +a_2 e_2 + \cdots + a_n e_n.$$ Let $I=Ra_1 +Ra_2 + \cdots Ra_n.$ Then $I \subseteq \mathfrak{m}$ is a proper ideal of $R$, and so every element of $I$ is a zero divisor by $(2)$. Thus there is a $0 \neq x \in R$ such that $xI=0$ by \cite{Filipowicz}*{Theorem 3.4}. Which then implies that $$x=x(a_1e_1 +a_2 e_2 + \cdots + a_n e_n)=(xa_1)e_1 +(xa_2) e_2 + \cdots + (xa_n) e_n=0.$$A contradiction. Therefore $\mathfrak{m}E \neq E$, and this proves $(3)$.
		
		\medskip
		
		$(c) \Rightarrow (a)$ is clear, since $R$ is commutative. As $\mathfrak{m}E\neq E$ implies that $E/\mathfrak{m}E$ is a semisimple $R/\mathfrak{m}$-module and hence projects onto $R/\mathfrak{m}$.
		
		\medskip
		
		$(d)\Rightarrow (a+b+c)$ follows from Corollary \ref{cor:reversibleKasch} and holds always for commutative rings.
		
		$(b)\Rightarrow (d)$: Suppose $R$ is a Noetherian classical ring and $I$ any proper  ideal of $R$, then $I$ is finitely generated as $R$ is Noetherian. Since a Noetherian ring has finite Goldie dimension, $I$ has non-zero annihilator by \cite{Filipowicz}*{Theorem 3.4}. Hence, by \cite{Lam}*{8.28}, $R$ is a Kasch ring.
	\end{proof}
	
	\begin{example}[dual Kasch rings that are not Kasch]
		In general, commutative dual Kasch rings need not be Kasch rings. Let $R$ be a commutative von Neumann regular ring which is not semisimple. Then $R$ dual Kasch, because every every simple $R$-module is injective. On the other hand, $R$ is not a Kasch ring since $soc(R)$ is projective and $R$ is not semisimple, i.e. there exist non-projective simple modules.
	\end{example}

	Commutative local rings need not be dual Kasch. For example, the localization  $\Z_{(p)}$  of $\Z$ at the prime ideal $(p)$ is a commutative Noetherian local ring, hence semiperfect. Since $E(\Z_{(p)})=\Q$ has no maximal submodules, the ring  $\Z_{(p)}$ is not dual Kasch.
	
	\begin{example}[dual Kasch rings are not closed under factor  rings]
		Let $R$ be any commutative Noetherian local ring with maximal ideal $M$ and zero socle. 
		Form the trivial extension of $R$ and $S=R/M$, i.e.
		$$\tilde{R}=R\ltimes S = \left\{ \left(\begin{array}{cc} a & x \\ 0 & a\end{array}\right) : a\in R, x\in S\right\}$$ 
		with ordinary matrix operations. Then $\tilde{R}$ is a commutative local ring with maximal ideal $\tilde{M} = M\ltimes S$.  As $\tilde{R}/\tilde{M} \simeq R/M  = S \simeq 0\ltimes S$, $\tilde{R}$ is a commutative local Kasch ring and hence dual Kasch.
		
		\medskip
		
		For each essential ideal $I$ of $R$, the ideal 
		$$\tilde{I} =  \left\{ \left(\begin{array}{cc} a & x \\ 0 & a\end{array}\right) : a\in I, x\in S\right\}$$ 
		is an essential ideal of $\tilde{R}$. Since $\soc(R)=0$, the intersection of all essential ideals of $R$ is zero. Hence 
		$$\soc(\tilde{R}) = \bigcap \left\{ \tilde{I} : I \mbox{ essential in } R\right\} = \left(\begin{array}{cc} 0 & S \\ 0 & 0\end{array}\right) = 0\bowtie S.$$
		As $\tilde{R}/\soc(\tilde{R})  \simeq R$ has zero socle and is therefore not Kasch. As $R$ was supposed to be commutative Noetherian, it cannot be dual Kasch by Theorem \ref{thm:dualKasch_comm}.
	\end{example}

	\section{Dualities and Artin algebras}
	
	The aim of this section is to compare the notion of Kasch and dual Kasch ring in case a duality is present. A standard result of Artin algebras $A$  says that there exists a duality between finitely generated right $A$-modules and finitely generated left $A$-modules (see  \cite{AuslanderReitenSmalo}*{Theorem 3.3}). 
	For us this will mean that an Artin algebra (see definition below) is Kasch on one side if and only if it is dual Kasch on the other side (see Theorem \ref{thm:Auslander}). 
	
	\medskip
	
	Let us recall what a duality is. Two categories $\cC, \cD$ are called equivalent if there are covariant functors $F:\cC \rightarrow \cD$ and $G:\cD\rightarrow \cC$ with functorial isomorphisms $GF \simeq \id_\cC$ and $FG \simeq \id_\cD$. In this case the functors $F$ and $G$ are called \emph{equivalences}. We say that $G$ is the (equivalence) inverse of $F$.
	A duality between two categories $\cC$ and $\cD$ is an equivalence between $F:\cC\rightarrow \cD^{op}$, where $\cD^{op}$ denotes the opposite category. In this case $F$ and $G$ are called \emph{dualities}. 
	
	Let $R$ and $S$ be rings and $\cC\subseteq \Mod{R}$ and $\cD\subseteq \Mod{S}$ be full subcategories of $\Mod{R}$ and $\Mod{S}$ respectively. Suppose there exists a duality $F:\cC\rightarrow \cD$ with inverse $G:\cD\rightarrow \cC$ such that $F$ and $G$ are contravariant exact functors. Then 
	$M\in \cC$ is injective in $\cC$ if and only if  $F(M)\in \cD$ is projective in $\cD$ and  $M\in \cC$ is projective in $\cC$ if and only if $F(M) \in \cD$ is injective in $\cD$. Moreover, $M\in \cC$ is simple if and only if $F(M)\in \cD$ is simple (see for instance \cite{AndersonFuller}*{Exercise 23.6}).
	
	\begin{lem}
		Let $R$ and $S$ be rings and $\cC\subseteq \Mod{R}$ and $\cD\subseteq \Mod{S}$ be full subcategories.
		Suppose there exists a duality $F:\cC\rightarrow \cD$ with inverse $G:\cD\rightarrow \cC$ such that $F$ and $G$ are contravariant exact functors. Then the following are equivalent:
		\begin{enumerate}
			\item[(a)] Every simple object in $\cC$ is isomorphic to a subobject of a projective object in $\cC$.
			\item[(b)] Every simple object in $\cD$ is isomorphic to a quotient object of an injective object in $\cD$.
		\end{enumerate}
		
	\end{lem}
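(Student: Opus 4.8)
The plan is to read off the conclusion from the three properties of the duality recalled immediately before the statement: $F$ and $G$ interchange simple objects with simple objects, interchange projective objects with injective objects, and, being contravariant and exact, interchange monomorphisms with epimorphisms. Granting these, the proof is a two-line diagram chase in each direction, and I do not expect a genuine obstacle — only the bookkeeping of which of $F$, $G$ is applied on which side.

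For $(a)\Rightarrow(b)$, let $T$ be a simple object of $\cD$. Then $G(T)$ is simple in $\cC$, so by $(a)$ there is a monomorphism $\iota\colon G(T)\rightarrow P$ with $P$ projective in $\cC$. Applying $F$ turns $\iota$ into an epimorphism $F(\iota)\colon F(P)\rightarrow FG(T)$, and $F(P)$ is injective in $\cD$ because $P$ is projective in $\cC$. Composing with the natural isomorphism $FG\simeq\id_\cD$ exhibits $T$ as a quotient of the injective object $F(P)$, which is $(b)$.

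The implication $(b)\Rightarrow(a)$ is symmetric, with the roles of $F$ and $G$ exchanged. Given a simple object $M$ of $\cC$, the object $F(M)$ is simple in $\cD$, so by $(b)$ there is an epimorphism $\pi\colon E\rightarrow F(M)$ with $E$ injective in $\cD$. Applying $G$ turns $\pi$ into a monomorphism $G(\pi)\colon GF(M)\rightarrow G(E)$, and $G(E)$ is projective in $\cC$ because $E$ is injective in $\cD$. Precomposing with $GF\simeq\id_\cC$ realises $M$ as a subobject of the projective object $G(E)$, which is $(a)$.

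The single point that uses the exactness hypothesis is the claim that a contravariant exact functor sends monomorphisms to epimorphisms and epimorphisms to monomorphisms: applying such a functor to the short exact sequence $0\rightarrow A\rightarrow B\rightarrow B/A\rightarrow 0$ produces the short exact sequence $0\rightarrow F(B/A)\rightarrow F(B)\rightarrow F(A)\rightarrow 0$, so $F$ applied to the inclusion $A\rightarrow B$ is an epimorphism, and the epimorphism case is dual. Everything else is exactly the formal content of the facts displayed before the lemma.
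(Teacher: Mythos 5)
Your proof is correct and follows essentially the same route as the paper: apply $G$ to a simple object of $\cD$, embed the resulting simple object of $\cC$ into a projective, and apply $F$ to turn that monomorphism into an epimorphism from an injective object, with the converse argued symmetrically. The only difference is that you spell out the reverse implication and the mono/epi exchange explicitly, where the paper simply says the converse "goes analogously" and cites the facts recalled before the lemma.
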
	
	
	\begin{proof}
		$(a)\Rightarrow (b)$ If $S\in \cD$ is simple, then $G(S)$ is simple in $\cC$. By (1) there exists a monomorphism $f:G(S)\rightarrow P$ into a projective object $P\in\cC$. Then $F(f):F(P)\rightarrow FG(S)\simeq S$ is an epimorphism of the injective object $F(P)$ onto $S$. The converse $(b)\Rightarrow (a)$ goes analogously.
	\end{proof}
	
	Let $k$ be a field and $R$ a finite dimensional $k$-algebra. Then taking the dual space is a duality between finite dimensional right $R$-modules and finite dimensional left $R$-modules. More concretely, let $M$ be a finite dimensional (=finitely generated) right $R$-module, then the left $R$-module structure on $D(M)=\mathrm{Hom}_k(M,k)$ is defined by
	$$ (a\cdot f)(m) := f(ma), \qquad \forall f\in D(M), m\in M, a\in R.$$
	We will denote use the same letter $D$ for the inverse functor $D:\modo{R}\rightarrow \mod{R}$ sending $D(M)=M^*$. 
	Here  $R^{op}$ denotes the opposite ring and $\modo{R}$ is identified with the category of left $R$-modules. The right $R$-module structure on $D(M)$, for a left $R$-module $M$ is defined by
	$$ (f\cdot a)(m) := f(am), \qquad \forall f\in D(M), m\in M, a\in R.$$
	
	Note that if $R$ is finite dimensional, then all simple left (resp. right) $R$-modules are finite dimensional. Furthermore, any projective module in $\mod{R}$ is also projective in $\Mod{R}$. The same is true for the injective modules, i.e. any finite dimensional module that is injective in $\mod{R}$ is injective in $\Mod{R}$ (by Baer's criterion).

	\begin{cor}\label{cor:duality_Kasch} A finite dimensional $k$-algebra is right dual Kasch if and only if it is left Kasch.
	\end{cor}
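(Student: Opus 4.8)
The plan is to deduce this from the Lemma immediately above, applied to the $k$-linear duality $D$. I would take $\cC$ to be the category of finite dimensional left $R$-modules and $\cD$ the category of finite dimensional right $R$-modules, with $F=D\colon\cC\to\cD$, $M\mapsto\Hom_k(M,k)$, and inverse again written $D$. As recalled in the text, these are contravariant exact functors forming a duality, so the Lemma yields: every simple object of $\cC$ embeds into a projective object of $\cC$ if and only if every simple object of $\cD$ is a quotient of an injective object of $\cD$. The remaining work is to recognise the left-hand condition as ``$R$ is left Kasch'' and the right-hand one as ``$R$ is right dual Kasch''.

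For the left-hand side I would argue as follows. Every simple left $R$-module is finite dimensional, hence an object of $\cC$, and conversely; moreover, as noted in the text, the projective (resp.\ injective) objects of $\cC$ are precisely the finite dimensional projective (resp.\ injective) left $R$-modules, and these remain projective (resp.\ injective) in $\Modo{R}$. Since $R$ is finite dimensional, $\,{}_RR\,$ is such a projective object. Hence, applying Theorem~\ref{Kasch}(1) to $R^{op}$, the following are equivalent: every simple object of $\cC$ embeds in a projective object of $\cC$; every simple left $R$-module embeds in a projective left $R$-module; every simple left $R$-module embeds in $\,{}_RR$; $R$ is left Kasch.

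For the right-hand side, one implication is easy: a finite dimensional injective right module is injective in $\Mod{R}$ by Baer's criterion, so if every simple right module is a quotient of an injective object of $\cD$, then $R$ is right dual Kasch by Theorem~\ref{Kasch}(2). For the converse, suppose $R$ is right dual Kasch; by Theorem~\ref{Kasch}(2) every simple right module is a homomorphic image of $E(R_R)$, and I claim $E(R_R)$ is finite dimensional. Indeed, let $\pi\colon P\to D(R_R)$ be a projective cover of the finite dimensional left module $D(R_R)$ (it exists as $R$ is semiperfect); then $D(P)$ is a finite dimensional injective right module and $D(\pi)$ exhibits $R_R\cong D(D(R_R))$ as an essential submodule of $D(P)$ (a duality interchanging superfluous epimorphisms with essential monomorphisms), so $E(R_R)\cong D(P)$ is finite dimensional. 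Thus $E(R_R)$ is an injective object of $\cD$ onto which every simple object of $\cD$ maps, and the right-hand condition holds. Chaining the equivalences gives that $R$ is left Kasch if and only if $R$ is right dual Kasch.

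I expect no genuine obstacle here: the conceptual content — the passage from ``submodule of a projective'' to ``quotient of an injective'' under a duality — is already packaged in the Lemma. The only points that need care are the routine comparison between ``object of $\cC$'' and ``module over $R$'' for the notions of simple, projective and injective, and the standard fact that over a finite dimensional algebra the injective hull of a finite dimensional module is again finite dimensional, which is exactly what lets the dual Kasch condition be tested inside the finite dimensional category $\cD$.
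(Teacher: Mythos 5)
Your proof is correct and follows essentially the same route as the paper: the paper's own proof is a one-line appeal to the exactness of the $k$-linear duality $D$ combined with the preceding Lemma. You additionally spell out the identifications that the paper leaves implicit --- in particular the point that the right dual Kasch condition, quantified over all injectives in $\Mod{R}$, can be tested inside $\mod{R}$ because $E(R_R)$ is finite dimensional --- which is a genuine (and correctly handled) detail, but not a different approach.
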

	\begin{proof}
		By  \cite{AssemSimsonSkrowonski}*{Theorem 5.13} or \cite{AndersonFuller}*{Exercise 23.6}, the duality functor  $D:\mod{R}\rightarrow \modo{R}$ is an exact duality for a finite dimensional $k$-algebra $R$.
	\end{proof}
	
	\begin{example}[An example of a left dual Kasch ring that is not right dual Kasch]\label{exa:leftNotRight}
		In \cite{Lam}*{8.29(6)} an example of a $5$-dimensional algebra $A$ over a field $K$ is given that is right Kasch, but not left Kasch. Hence by Corollary \ref{cor:duality_Kasch} and its left version, $A$ is left dual Kasch, but not right dual Kasch. The example given in \cite{Lam}*{8.29(6)} is the subring of $M_4(K)$, the ring of $4\times 4$-matrices with coefficients in $K$ of the form
		$$A=\left\{\left(\begin{array}{ccccc} 
			a & 0 & b & c \\
			0 & a & 0 & d \\
			0 & 0 & a & 0 \\
			0 & 0 & 0 & e \end{array}\right) \in M_{4}(K): a,b,c,d,e \in K\right\}$$
		
	\end{example}
	
	\medskip

	The duality for finite dimensional algebras can be generalized to Artin algebras. An Artin algebra is a ring $A$ with a ring  homomorphism $R\rightarrow Z(A)$ from a commutative Artinian ring $R$ into the center of $A$ such that $A$ is a finitely generated $R$-module. Since $R$ is Artinian it has only a finite number of non-isomorphic simple modules, say $S_1, \ldots, S_n$. Let $E(S_i)$ be the injective hull of $S_i$ in $\mod{R}$ and consider $Q=E(S_1)\oplus \cdots \oplus E(S_n)$, which is an injective cogenerator in $\mod{R}$.
	
	\begin{thm}[{\cite{AuslanderReitenSmalo}*{Theorem 3.3}}]\label{thm:Auslander}
		If $A$ is an artin $R$-algebra, then the contravariant exact functor $D:\mod{A}\rightarrow \modo{A}$ with 
		$$D(M) = \mathrm{Hom}_R(M,Q), \qquad \forall M\in \mod{A}$$
		is a duality. In particular $A$ is a right dual Kasch ring if and only if $A$ is left Kasch.
	\end{thm}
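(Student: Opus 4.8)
The first assertion, that $D=\Hom_R(-,Q)$ is a duality, is precisely \cite{AuslanderReitenSmalo}*{Theorem 3.3}, which I would simply invoke; it provides in particular mutually inverse exact contravariant functors $D:\modo{A}\to\mod{A}$ and $D:\mod{A}\to\modo{A}$. So the content to be proved is the ``in particular'', and the plan is to apply the Lemma stated just before Corollary \ref{cor:duality_Kasch} (the one attached to a duality between full subcategories) to $\cC=\modo{A}$, $\cD=\mod{A}$ with this $D$, after translating its condition (a) into ``$A$ is left Kasch'' and its condition (b) into ``$A$ is right dual Kasch''.

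That translation needs one to move between $\mod{A}$ and $\Mod{A}$, so first I would record the relevant finiteness facts. An artin $R$-algebra $A$ is finitely generated over the Artinian ring $R$, hence two-sided Artinian, hence Noetherian; moreover $A$ and the injective cogenerator $D({}_AA)$ of $\Mod{A}$ have finite length, so the finite-length modules are closed under submodules and injective hulls. From this I extract: (i) the injective hull $E(A_A)$ already lies in $\mod{A}$ --- for $A_A$ has essential socle (being of finite length), so $E(A_A)$ is a finite direct sum of injective hulls of simple modules, and each such hull embeds into $D({}_AA)$ and is therefore of finite length; and (ii) a finite-length module injective in $\mod{A}$ is injective in $\Mod{A}$, by Baer's criterion for the Noetherian ring $A$, just as in the finite-dimensional case of Corollary \ref{cor:duality_Kasch}.

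Given (i) and (ii), the two translations are routine. On the Kasch side, $A$ is left Kasch iff every simple left $A$-module embeds into ${}_AA$; since ${}_AA$ is a projective object of $\modo{A}$, this is equivalent to ``every simple object of $\modo{A}$ is isomorphic to a subobject of a projective object of $\modo{A}$'', the only non-obvious implication being the coordinate-projection argument from the proof of Theorem \ref{Kasch}(1) applied to a direct summand of some ${}_AA^n$. On the dual-Kasch side, by Theorem \ref{Kasch}(2) $A$ is right dual Kasch iff every simple right $A$-module is a homomorphic image of $E(A_A)$; by (i), $E(A_A)$ is an injective object of $\mod{A}$, and by (ii) every injective object of $\mod{A}$ is injective in $\Mod{A}$, so this condition is equivalent to ``every simple object of $\mod{A}$ is isomorphic to a quotient of an injective object of $\mod{A}$''. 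Now the Lemma, applied to the duality $D:\modo{A}\to\mod{A}$, equates these two statements, i.e. $A$ is left Kasch iff $A$ is right dual Kasch.

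The handling of the duality and of Theorem \ref{Kasch} is purely formal; the step I expect to need the most care is (i)--(ii) --- verifying that for an artin algebra the injective hull of $A_A$ and the injective objects of $\mod{A}$ stay inside $\mod{A}$ and remain injective over all of $\Mod{A}$ --- since this is exactly what makes the Kasch and dual-Kasch conditions insensitive to whether one tests them with finitely generated modules or with arbitrary modules, and hence what lets the finite-dimensional duality argument go through verbatim.
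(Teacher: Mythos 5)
Your proposal is correct and follows exactly the route the paper intends: the duality itself is taken from \cite{AuslanderReitenSmalo}*{Theorem 3.3}, and the ``in particular'' is obtained by feeding $D$ into the Lemma preceding Corollary \ref{cor:duality_Kasch}, just as in the finite-dimensional case. The paper leaves this proof implicit (it only spells out the finiteness facts for finite-dimensional algebras); your points (i) and (ii) --- that $E(A_A)$ stays in $\mod{A}$ and that injectives of $\mod{A}$ are injective in $\Mod{A}$ via Baer's criterion --- are precisely the verifications needed to transport the argument to Artin algebras, and they are carried out correctly.
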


	
	

	\section*{Acknowledgment}
	The second author, Christian Lomp, was partially supported by CMUP, which is financed by national funds through FCT – Fundação para a Ciência e
	a Tecnologia, I.P., under the project with reference UIDB/00144/2020. This paper is a part of M.Sc. Thesis of the third author.

	\begin{bibdiv}
		\begin{biblist}
			
			\bib{maxprojective}{article}{
				author={Alag\"{o}z, Y.},
				author={B\"{u}y\"{u}ka\c{s}\i k, E.},
				title={Max-projective modules},
				journal={J. Algebra Appl.},
				volume={20},
				date={2021},
				number={6},
				pages={Paper No. 2150095, 25},

			}
			
			\bib{AndersonFuller}{book}{
				author={Anderson, F. W.},
				author={Fuller, K. R.},
				title={Rings and categories of modules},
				series={Graduate Texts in Mathematics, Vol. 13},
				publisher={Springer-Verlag, New York-Heidelberg},
				date={1974},
				pages={viii+339},
				review={\MR{0417223}},
			}

			\bib{AssemSimsonSkrowonski}{book}{
				author={Assem, I.},
				author={Simson, D.},
				author={Skowro\'{n}ski, A.},
				title={Elements of the representation theory of associative algebras.
					Vol. 1},
				series={London Mathematical Society Student Texts},
				volume={65},
				note={Techniques of representation theory},
				publisher={Cambridge University Press, Cambridge},
				date={2006},
				pages={x+458},
				isbn={978-0-521-58423-4},
				isbn={978-0-521-58631-3},
				isbn={0-521-58631-3},
				review={\MR{2197389}},
				doi={10.1017/CBO9780511614309},
			}
			\bib{AuslanderReitenSmalo}{book}{
				author={Auslander, M.},
				author={Reiten, I.},
				author={Smal\o , S. O.},
				title={Representation theory of Artin algebras},
				series={Cambridge Studies in Advanced Mathematics},
				volume={36},
				publisher={Cambridge University Press, Cambridge},
				date={1995},
				pages={xiv+423},
				isbn={0-521-41134-3},
				review={\MR{1314422}},
				doi={10.1017/CBO9780511623608},
			}

			\bib{AL}{article}{
				author={Aydo\u{g}du, P.},
				author={L\'{o}pez-Permouth, S.},
				title={An alternative perspective on injectivity of modules},
				journal={J. Algebra},
				volume={338},
				date={2011},
				pages={207--219},
				issn={0021-8693},
				review={\MR{2805190}},
				doi={10.1016/j.jalgebra.2011.04.021},
			}
			
			\bib{coneat}{article}{
				author={B\"{u}y\"{u}ka\c{s}ık, E.},
				author={Dur\u{g}un, Y.},
				title={Coneat submodules and coneat-flat modules},
				journal={J. Korean Math. Soc.},
				volume={51},
				date={2014},
				number={6},
				pages={1305--1319},
				
			}
			
			\bib{camillo}{article}{
				author={Camillo, V.},
				title={Homological independence of injective hulls of simple modules over
					commutative rings},
				journal={Comm. Algebra},
				volume={6},
				date={1978},
				number={14},
				pages={1459--1469},
				
			}
			
			\bib{connell}{article}{
				author={Connell, Ian G.},
				title={On the group ring},
				journal={Canadian J. Math.},
				volume={15},
				date={1963},
				pages={650--685},
				issn={0008-414X},
				review={\MR{153705}},
				doi={10.4153/CJM-1963-067-0},
			}

			\bib{Filipowicz}{article}{
				author={Filipowicz, M.},
				author={K\c{e}pczyk, M.},
				title={A note on zero-divisors of commutative rings},
				language={English, with English and Arabic summaries},
				journal={Arab. J. Math. (Springer)},
				volume={1},
				date={2012},
				number={2},
				pages={191--194},
				issn={2193-5343},
				review={\MR{3041048}},
				doi={10.1007/s40065-012-0003-8},
			}
			
			\bib{jain}{article}{
				author={Jain, S. K.},
				title={Self-injective group ring},
				journal={Arch. Math. (Basel)},
				volume={22},
				date={1971},
				pages={53--54},
				issn={0003-889X},
				review={\MR{285630}},
				doi={10.1007/BF01222535},
			}

			\bib{Kasch}{article}{
				author={Kasch, F.},
				title={Grundlagen einer Theorie der Frobeniuserweiterungen},
				language={German},
				journal={Math. Ann.},
				volume={127},
				date={1954},
				pages={453--474},
				issn={0025-5831},
				review={\MR{62724}},
				doi={10.1007/BF01361137},
			}
			
			\bib{Lam}{book}{
				author={Lam, T. Y.},
				title={Lectures on modules and rings},
				series={Graduate Texts in Mathematics},
				volume={189},
				publisher={Springer-Verlag, New York},
				date={1999},
				pages={xxiv+557},
				
			}
			\bib{Matlis}{article}{
				author={Matlis, E.},
				title={Injective modules over Noetherian rings},
				journal={Pacific J. Math.},
				volume={8},
				date={1958},
				pages={511--528},
				
			}
			
			\bib{McConnellRobson}{book}{
				author={McConnell, J. C.},
				author={Robson, J. C.},
				title={Noncommutative Noetherian rings},
				series={Graduate Studies in Mathematics},
				volume={30},
				edition={Revised edition},
				note={With the cooperation of L. W. Small},
				publisher={American Mathematical Society, Providence, RI},
				date={2001},
				pages={xx+636},
				isbn={0-8218-2169-5},
				review={\MR{1811901}},
				doi={10.1090/gsm/030},
			}

			\bib{cyclic}{article}{
				author={Meri\c{c}, E. T.},
				title={When proper cyclics are homomorphic image of injectives},
				journal={Comm. Algebra},
				volume={49},
				date={2021},
				number={1},
				pages={151--161},
				
			}
			
			\bib{qf}{book}{
				author={Nicholson, W. K.},
				author={Yousif, M. F.},
				title={Quasi-Frobenius rings},
				series={Cambridge Tracts in Mathematics},
				volume={158},
				publisher={Cambridge University Press, Cambridge},
				date={2003},
				pages={xviii+307},
				isbn={0-521-81593-2},
				
			}
			
			\bib{passman}{article}{
				author={Passman, D. S.},
				title={It's essentially Maschke's theorem},
				journal={Rocky Mountain J. Math.},
				volume={13},
				date={1983},
				number={1},
				pages={37--54},
				issn={0035-7596},
				review={\MR{692575}},
				doi={10.1216/RMJ-1983-13-1-37},
			}
			
			\bib{Ringel}{article}{
				author={Ringel, C. M.},
				title={The finitistic dimension of a Nakayama algebra},
				journal={J. Algebra},
				volume={576},
				date={2021},
				pages={95--145},
				issn={0021-8693},
				
			}
			
			\bib{shen}{article}{
				author={Shen, L.},
				title={Group rings with annihilator conditions},
				journal={Acta Math. Hungar.},
				volume={156},
				date={2018},
				number={1},
				pages={38--46},
				issn={0236-5294},
				review={\MR{3856900}},
				doi={10.1007/s10474-018-0860-5},
			}

			\bib{wisbauer}{book}{
				author={Wisbauer, R.},
				title={Foundations of module and ring theory},
				series={Algebra, Logic and Applications},
				volume={3},
				edition={Revised and translated from the 1988 German edition},
				
				publisher={Gordon and Breach Science Publishers, Philadelphia, PA},
				date={1991},
				pages={xii+606},
				
			}

		\end{biblist}
		
	\end{bibdiv}
	
\end{document}